\newtheorem{theo}{Theorem}[section]
\newtheorem{prop}[theo]{Proposition}
\newtheorem{lem}[theo]{Lemma}
\newtheorem{theorem}[theo]{Theorem}
\theoremstyle{definition}
\theoremstyle{remark}
\newtheorem{remark}[theo]{Remark}
\begin{document}

\title{Minimal coloring numbers on minimal diagrams of torus links}

\author{Kazuhiro Ichihara}
\address{Department of Mathematics, College of Humanities and Sciences, Nihon University, 3-25-40 Sakurajosui, Setagaya-ku, Tokyo 156-8550, Japan}
\email{ichihara.kazuhiro@nihon-u.ac.jp}

\author{Katsumi Ishikawa}
\address{Research Institute for Mathematical Sciences, Kyoto University, Kyoto 606-8502, Japan}
\email{katsumi@kurims.kyoto-u.ac.jp}

\author{Eri Matsudo}
\address{Institute of Natural Sciences, 
Nihon University, 3-25-40 Sakurajosui, Setagaya-ku, Tokyo 156-8550, Japan}
\email{cher16001@g.nihon-u.ac.jp}

\thanks{The first author is partially supported by JSPS KAKENHI Grant Number JP18K03287.}
\thanks{The second author was partially supported by JSPS KAKENHI Grant Number JP16J01183.}

\subjclass[2010]{57M25}
\keywords{$\mathbb{Z}$-coloring, torus link, minimal diagram}
\date{\today}

\begin{abstract}
We determine the minimal number of colors for non-trivial $\mathbb{Z}$-colorings on the standard minimal diagrams of $\mathbb{Z}$-colorable torus links. 
Also included are complete classifications of such $\mathbb{Z}$-colorings and of such $\mathbb{Z}$-colorings by only four colors, which are shown by using rack colorings on link diagrams. 
\end{abstract}

\maketitle

\section{Introduction}

This is a continuation of the study of $\mathbb{Z}$-colorings on the standard minimal diagrams of $\mathbb{Z}$-colorable torus links given in \cite[Section 3]{IchiharaMatsudo3} by the first and the third named authors. 

Previously, in \cite{IchiharaMatsudo2}, as a generalization to the well-known Fox's coloring originally introduced in \cite{Fox}, they defined a $\mathbb{Z}$-coloring for a link in the 3-space as follows. 
A map $\gamma:\{$arcs of $D\}\rightarrow \mathbb{Z}$ for a regular diagram $D$ of a link is called a \textit{$\mathbb{Z}$-coloring} if it satisfies the condition $2\gamma(a)= \gamma(b)+\gamma(c)$ 
at each crossing of $D$ with the over arc $a$ and the under arcs $b$ and $c$. 
We say that a link is \textit{$\mathbb{Z}$-colorable} if it has a diagram admitting a non-trivial $\mathbb{Z}$-coloring, i.e., there are at least two distinct colors on the diagram. 
We remark that a link $L$ is $\mathbb{Z}$-colorable if and only if the determinant $\det(L)$ of $L$ equals $0$. 
Since the determinant of any knot (single component link) is shown to be an odd integer, any knot is not $\mathbb{Z}$-colorable. 

There are several studies on the \textit{minimal coloring number} (i.e., the minimal number of the colors) of Fox colorings on knots and links; some upper and lower bounds have been obtained. 
In the same line, it was shown in \cite[Theorem 3.1]{IchiharaMatsudo2} that the minimal coloring number $mincol_{\mathbb{Z}}(L)$ of a non-splittable $\mathbb{Z}$-colorable link $L$ is at least four. 
(Note that $mincol_{\mathbb{Z}}(L) = 2$ for any splittable link $L$.) 
However, in contrast to the case of the Fox coloring, it was proved that $mincol_{\mathbb{Z}}(L) = 4$ for any non-splittable $\mathbb{Z}$-colorable link $L$, by the third author in \cite{Matsudo}, and independently by Meiqiao Zhang, Xian'an Jin, and Qingying Deng in \cite{ZhangJinDeng}. 

Thereafter, in \cite{IchiharaMatsudo3}, the first and third authors consider the minimal coloring number $mincol_\mathbb{Z}(D)$ of a minimal diagram $D$ of a $\mathbb{Z}$-colorable link. (A \textit{minimal diagram} is a diagram representing the link with least number of crossings.) 

In particular, in \cite[Section 3]{IchiharaMatsudo3}, they consider \textit{torus links}, that is, the links which can be isotoped onto the standardly embedded torus in the $3$-space. 
By $T(a,b)$, we mean the torus link running $a$ times meridionally and $b$ times longitudinally. 
It is known that $T(a,b)$ is $\mathbb{Z}$-colorable if $a$ or $b$ is even. 
Actually, it is shown in \cite[Theorem 1.3]{IchiharaMatsudo3} that $mincol_\mathbb{Z}(D) = 4$ for the standard diagram $D$ of $T(pn,n)$ illustrated by Figure~\ref{torus} with $n >2$, even and $p \ne 0$. 
Extending this, in this paper, we show the following: 

\begin{theorem}\label{main-thm}
Let $p, q,$ and $r$ be non-zero integers such that $p$ and $q$ are relatively prime, $|p|\geq q\geq 1,$ and $r\geq 2$. 
Let $D$ be the standard diagram of $T(pr,qr)$ illustrated by Figure~\ref{torus}. 
Suppose that $T(pr,qr)$ is $\mathbb{Z}$-colorable, or, equivalently, $pr$ or $qr$ is even. 
Then, $mincol_\mathbb{Z}(D) =4$ if $r$ is even, and $mincol_\mathbb{Z}(D)=5$ if $r$ is odd. 
\end{theorem}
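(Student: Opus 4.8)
My plan is to realize $D$ as the closure of the braid $(\sigma_1\sigma_2\cdots\sigma_{qr-1})^{pr}$ on $N=qr$ strands and to record a $\mathbb{Z}$-coloring by the vector $\mathbf{x}=(x_1,\dots,x_N)$ of colors entering one full meridional twist $\Delta=\sigma_1\cdots\sigma_{N-1}$. A direct computation shows that $\Delta$ acts on colors by $\Delta(\mathbf{x})=(2x_1-x_2,\dots,2x_1-x_N,x_1)$, so in one twist the single over-strand keeps its color $x_1$ and every under-strand is reflected through it via $R_{x_1}(t)=2x_1-t$. Since each non-top strand passes under exactly once per twist, the colors appearing on \emph{all} arcs of $D$ are exactly the union $\bigcup_k S_k$ of the level sets $S_k$, with $S_{k+1}=R_{c_k}(S_k)$ and $c_k$ the over-color of the $k$-th twist. $\mathbb{Z}$-colorings are then the fixed vectors of $\Delta^{pr}$; passing to the gap vector $g_i=x_{i+1}-x_i$ converts $\Delta$ into the cleaner shift $\Psi(\mathbf g)=(-g_2,\dots,-g_{N-1},\,g_1+\cdots+g_{N-1})$, whose eigenvalues are $-\omega$ with $\omega^{qr}=1$, $\omega\neq1$. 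Imposing $\Psi^{pr}\mathbf g=\mathbf g$ together with the closing-up condition $\sum_k g_1^{(k)}=0$ on the overall shift yields an explicit parametrization of all $\mathbb{Z}$-colorings of $D$; the root-of-unity bookkeeping gives a free module of rank $r-1$ when $r$ is even and rank $r$ when $r$ is odd, which already recovers the stated $\mathbb{Z}$-colorability and foreshadows the even/odd dichotomy.

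For the upper bounds I would exhibit explicit members of this family. When $r$ is even I would generalize the $T(4,4)$ instance, whose entering level $(0,1,1,0)$ produces the successive levels $(0,1,1,0),(-1,-1,0,0),(-1,-2,-2,-1),(0,0,-1,-1)$ and closes up using only the four values $\{-2,-1,0,1\}$; the analogous choice (gaps with $g_2=0$, $g_{N-1}=-g_1$) gives a four-term arithmetic set $\{-2s,-s,0,s\}$ in general. When $r$ is odd I would generalize the $T(6,3)$ instance, whose entering level $(0,1,1)$ produces exactly $\{-2,-1,0,1,2\}$, yielding the symmetric five-term set $\{-2s,-s,0,s,2s\}$. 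The content of this step is to write these patterns for general $p,q,r$ and to check the crossing relation and the closure $\Delta^{pr}\mathbf x=\mathbf x$, a finite if slightly tedious verification.

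For the lower bounds, $mincol_{\mathbb Z}(D)\ge4$ is immediate from \cite[Theorem 3.1]{IchiharaMatsudo2}, so everything reduces to ruling out a $4$-coloring when $r$ is odd. My main tool would be a parity invariant: since $2\gamma(a)=\gamma(b)+\gamma(c)$ forces the two under-arcs at every crossing to have equal parity, every component of $D$ is monochromatic modulo $2$, so the $r$ components carry well-defined parities. For a coloring by four colors in arithmetic progression $\{0,1,2,3\}$ this splits the components into ``even'' ones (using $\{0,2\}$) and ``odd'' ones (using $\{1,3\}$), and a local analysis of the reflections $R_{c}$ shows that an even component can change color only by passing under an over-arc colored $1$, an odd component only under an over-arc colored $2$, while values such as $0$ under an over-color $3$ are forbidden outright ($2\cdot3-0=6\notin\{0,1,2,3\}$). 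I would then feed these constraints into the explicit classification above: the number of distinct colors is a concrete function of the $\sim r$ integer parameters, and tracking the over-color sequence $c_1,\dots,c_{pr}$ around the braid, the requirement that each component returns to its starting color after one loop should produce a $\mathbb{Z}/2$-valued quantity whose vanishing is exactly ``$r$ even.''

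The main obstacle is precisely this last step. Proving $mincol_{\mathbb Z}(D)\ge5$ for odd $r$ is not local: one must show that \emph{no} choice of parameters in the full coloring module collapses the color set to four values (whereas some choice does for even $r$), and I do not expect a short reduction ``minimal colorings are arithmetic progressions'' to be available, since four-color $\mathbb{Z}$-colorings need not be progressions in general. The honest route is through the global classification combined with the parity and reflection-center integrality constraints — equivalently, passing to the induced coloring by the four-element dihedral rack and showing that a surjective such coloring of \emph{this} diagram exists exactly when $r$ is even. I anticipate that this is where the rack-coloring machinery advertised in the abstract carries the weight, and I would organize the proof so that the odd-$r$ obstruction surfaces as the non-vanishing of that single $\mathbb{Z}/2$-quantity attached to the over-color sequence.
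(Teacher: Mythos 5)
Your setup (the one-twist map $\Delta(\mathbf{x})=(2x_1-x_2,\dots,2x_1-x_N,x_1)$, the fact that each component is monochromatic mod $2$, and the observation that in a four-coloring by $\{0,1,2,3\}$ every over arc must be colored $1$ or $2$) is sound and consistent with the paper, and your rank count for the module of colorings agrees with Proposition~\ref{col-prop}. But the proposal has a genuine gap exactly where you flag it: the inequality $mincol_{\mathbb{Z}}(D)\geq 5$ for odd $r$ is never derived. You reduce it to the non-vanishing of an unspecified ``$\mathbb{Z}/2$-valued quantity attached to the over-color sequence'' and say it \emph{should} detect the parity of $r$; that is a conjecture, not an argument, and it is the entire content of the odd-$r$ half of the theorem. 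The paper closes this step without any global classification or rack machinery (those are used only for the classification results in Section~\ref{sec:description}). The argument is: since the $r$ components each carry a well-defined parity and $r$ is odd, one of the two parity classes contains an odd number of components; applying the symmetry $X\mapsto -X+3$, one may assume the number of over arcs colored $1$ among the $r$ parallel over strands is odd. For $q=1$, the strand entering with color $2$ passes under an odd number of arcs colored $1$ (and some colored $2$), hence emerges with color $0$ and then becomes an over arc colored $0$ --- contradicting the restriction of over colors to $\{1,2\}$. For $q\geq 2$, the colors on the subfamily $\mathbf{y}_i$ are obtained from those on $\mathbf{x}_i$ by an affine map $f$ with $f(0)=2$ and $f(2)=0$ (forced by the odd count of over arcs colored $1$), so $f(X)=-X+2$ and any arc colored $3$ produces an arc colored $-1$, again a contradiction. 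Some such concrete tracking argument is what your proof is missing, and it is not a routine consequence of your parametrization.

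A secondary issue: your upper bounds are only verified on $T(4,4)$ and $T(6,3)$ and asserted to generalize via ``a finite if slightly tedious verification.'' The paper actually carries this out by exhibiting local color patterns that tile the whole diagram, and for odd $r$ it must split into the cases ``$p$ even'' and ``$q$ even'' (exactly one of which holds when $T(pr,qr)$ is $\mathbb{Z}$-colorable and $r$ is odd), using different seed colorings $(1,0,\dots,0,1)$ and $(2,1,\dots,1,0)/(0,1,\dots,1,2)$ in the two cases. You would need the analogous case analysis and the closure check $\Delta^{pr}\mathbf{x}=\mathbf{x}$ before that step can be considered complete.
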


\begin{figure}[H]
\begin{center}
\includegraphics[width=.7\textwidth]{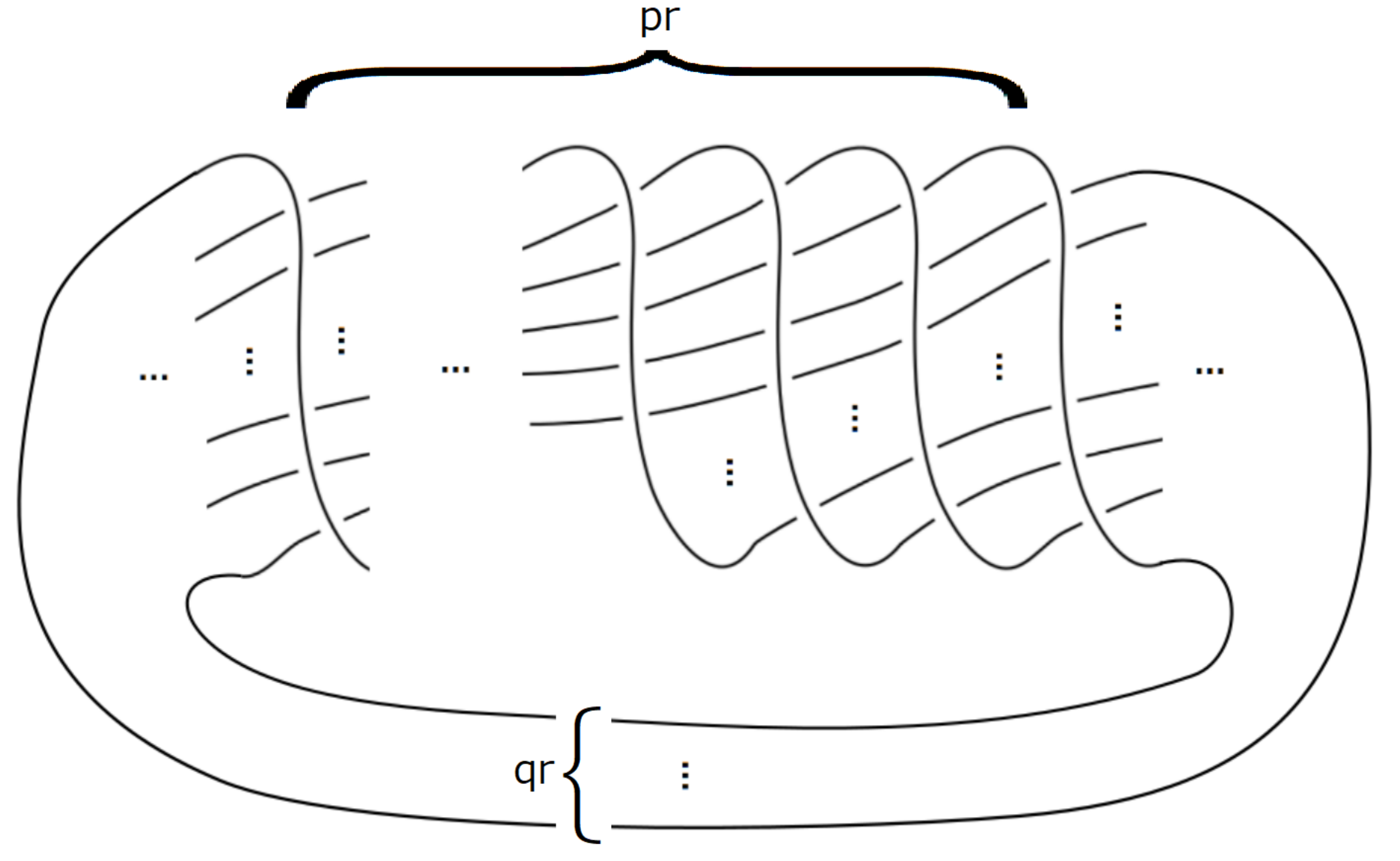}
\caption{The standard diagram of $T(pr,qr)$}\label{torus}
\end{center}
\end{figure}

We remark that the diagram $D$ has the least number of crossings for the torus link. See \cite{Kawauchi} for example. 
Also if $r=1$, then the link becomes a knot, which is not $\mathbb{Z}$-colorable. 
(Actually $r$ coincides with the number of components of the torus link.) 
The theorem above is proved in Section~\ref{sec:FourColors} (when $r$ is even) and Section~\ref{sec:FiveColors} (when $r$ is odd). 

We also include complete classifications of all the $\mathbb{Z}$-colorings on the standard diagram of $T(a,b)$ (Proposition~\ref{col-prop}) and of all the $\mathbb{Z}$-colorings by only four colors of $T(a,b)$ (Proposition~\ref{4col-prop}) in Section~\ref{sec:description}. 
The key of our proof of the propositions is to use rack colorings on link diagrams. 
A theorem used to prove the propositions, which can be of interest independently, is proved in Appendix. 
That part is essentially based on the master thesis \cite{Ishikawa} of the second author.


\section{Four colors case}\label{sec:FourColors}

In this section, we prove the following, showing the first assertion of Theorem~\ref{main-thm}. 

\begin{theorem}\label{thm_r:even}
Let $p, q,$ and $r$ be non-zero integers such that $p$ and $q$ are relatively prime, $|p|\geq q\geq 1,$ and $r\geq 2$. 
Let $D$ be the standard diagram of $T(pr,qr)$ illustrated by Figure~\ref{torus}. 
Suppose that $T(pr,qr)$ is $\mathbb{Z}$-colorable, or, equivalently, $pr$ or $qr$ is even. 
Then, $mincol_\mathbb{Z}(D) =4$ if $r$ is even. 
\end{theorem}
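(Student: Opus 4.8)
The lower bound is essentially free, so the whole content is a construction. Since $p,q\neq 0$, any two of the $r$ components of $T(pr,qr)$ are parallel $(p,q)$-curves on the torus with linking number $pq\neq 0$, so $T(pr,qr)$ is non-splittable. By \cite[Theorem 3.1]{IchiharaMatsudo2} a non-splittable $\mathbb{Z}$-colorable link has minimal coloring number at least $4$, and trivially $mincol_{\mathbb{Z}}(D)\geq mincol_{\mathbb{Z}}(T(pr,qr))\geq 4$. Hence it remains only to exhibit one non-trivial $\mathbb{Z}$-coloring of $D$ using exactly four colors.

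I would first encode colorings combinatorially. Present $D$ as the closed braid $(\sigma_1\cdots\sigma_{n-1})^{pr}$ on $n:=qr$ strands; this is a minimal diagram since $qr\leq|pr|$. Feeding the colors of the $n$ arcs entering one block $\sigma_1\cdots\sigma_{n-1}$ through the crossing rule $\gamma(c)=2\gamma(a)-\gamma(b)$, the colors leaving the block are the image under the linear map $L\colon\mathbb{Z}^{n}\to\mathbb{Z}^{n}$, $L(c_1,\dots,c_n)=(2c_1-c_2,\,2c_1-c_3,\,\dots,\,2c_1-c_n,\,c_1)$. As the combing over-strand is a single arc (colored $c_1$) and every under-strand passes from its level-$t$ color to its level-$(t{+}1)$ color in one crossing, every arc color occurs among the coordinates of $\vec c,L\vec c,\dots,L^{pr-1}\vec c$. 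Thus a $\mathbb{Z}$-coloring of $D$ is exactly a vector $\vec c\in\mathbb{Z}^{n}$ with $L^{pr}\vec c=\vec c$, and the colors used are precisely the coordinate values appearing in this orbit.

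For the construction, note that $r=2$ is vacuous (there $T(pr,qr)$ is not $\mathbb{Z}$-colorable), so I may assume $r\geq 4$ even. Take the tile $\vec t=(1,\dots,1,-1,-1)\in\mathbb{Z}^{r}$ (with $r-2$ entries equal to $1$) and let $\vec c\in\mathbb{Z}^{n}$ be its $q$-fold concatenation, an $r$-periodic vector. A short lemma shows that $L$ preserves $r$-periodic vectors and acts on one period exactly as the analogous map $L_r$ on $\mathbb{Z}^{r}$; consequently the whole $L$-orbit of $\vec c$ is the $r$-periodic extension of the $L_r$-orbit of $\vec t$. It therefore suffices to analyze the single tile: I must check that the $L_r$-orbit of $\vec t$ stays inside $\{-3,-1,1,3\}$, attains all four values, and has period dividing $pr$. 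Since that period turns out to be $r$ and $r\mid pr$, the coloring closes up, is non-trivial, and uses exactly four colors, which with the first paragraph yields $mincol_{\mathbb{Z}}(D)=4$.

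The main work, and the main obstacle, is the explicit verification of this orbit. One describes $L_r^{\,s}\vec t$ for every $s$ as a short travelling block (of shape $(-1,-1)$ or $(3,3)$) sweeping leftward across the strands, with the leading coordinate kept in $\{\pm 1\}$ so that no reflection $2c_1-c_j$ ever escapes $\{-3,-1,1,3\}$, and one confirms by induction on $s$ that after $r$ steps the block has crossed the seam $\sigma_{n-1}$-then-closure and returned, i.e.\ $L_r^{\,r}\vec t=\vec t$. Evenness of $r$ is decisive here: it is exactly the parity that makes the two-strand block pass through the closure with the correct sign so that the orbit closes after $r$ rather than $2r$ steps, and it is precisely the regime in which four colors suffice; for odd $r$ this same parity count obstructs closure within four values and forces the fifth color treated in Section~\ref{sec:FiveColors}.
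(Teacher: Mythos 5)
Your proof is correct, and it follows the same overall strategy as the paper -- cite \cite[Theorem 3.1]{IchiharaMatsudo2} for the lower bound, then exhibit one explicit four-color $\mathbb{Z}$-coloring that is periodic across the braid blocks -- but the construction and the bookkeeping differ. The paper colors each subfamily $\mathbf{x}_i$ by the tile $(1,0,\dots,0,1)$ and verifies the extension pictorially over the local diagram of Figure~\ref{torus9-1}; you use the tile $(1,\dots,1,-1,-1)$ with colors $\{-3,-1,1,3\}$ and verify closure algebraically by iterating the linear map $L$ and tracking the travelling $(-1,-1)$/$(3,3)$ block, which returns to its starting position after exactly $r$ steps. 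I checked your orbit: for even $r\geq 4$ one indeed gets $L_r^{\,s}\vec t$ equal to an all-ones vector with a $(3,3)$ or $(-1,-1)$ block sliding left, then $(-1,-3,\dots,-3,-1)$ at step $r-1$ and back to $\vec t$ at step $r$, all coordinates staying in $\{-3,-1,1,3\}$; your $r$-periodicity lemma for $L$ also checks out, so $L^{pr}\vec c=\vec c$. (Up to the affine normalization $x\mapsto(x+3)/2$ your coloring is the element $(2,\dots,2,1,1)$ of the family $A^{(4)}_{12}$ of Proposition~\ref{4col-prop}, whereas the paper's lies in $A^{(4)}_{01}$, so the two colorings are genuinely different members of the same classified family.) Two remarks. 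First, your observation that $r=2$ is vacuous is correct and actually sharper than the paper: for $r=2$ the link $T(2p,2q)$ has nonzero determinant (e.g.\ the Hopf link), the hypothesis of the theorem fails, and the paper's own tile degenerates to the constant vector $(1,1)$ there -- so excluding $r=2$ explicitly, as you do, is the right move. Second, the one place where you defer detail is the induction on $s$ describing $L_r^{\,s}\vec t$; since the claimed form of the orbit is correct and the verification is a two-line computation at each step, this is a presentational gap only, comparable to the paper's reliance on figures. You should also say a word about the sign of $p$ (for $p<0$ the blocks contribute $L^{-1}$, but the closure condition $L^{|pr|}\vec c=\vec c$ is unchanged), a point the paper likewise leaves to the figures.
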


\begin{proof}
We will find a $\mathbb{Z}$-coloring $\gamma$ with only four colors on $D$ by assigning colors on the arcs of $D$. 

Note that the link has $r$ components each of which runs longitudinally $q$ times and twists meridionally $p$ times as shown in Figure \ref{torus}. 
In a local view, we see $qr$ horizontal parallel arcs in $D$. 
We divide such arcs into $q$ subfamilies $\mathbf{x}_1, \dots, \mathbf{x}_q$ as depicted in Figure \ref{torus9-1}~(left).

\begin{figure}[H]
\begin{center}
\includegraphics[width=.43\textwidth]{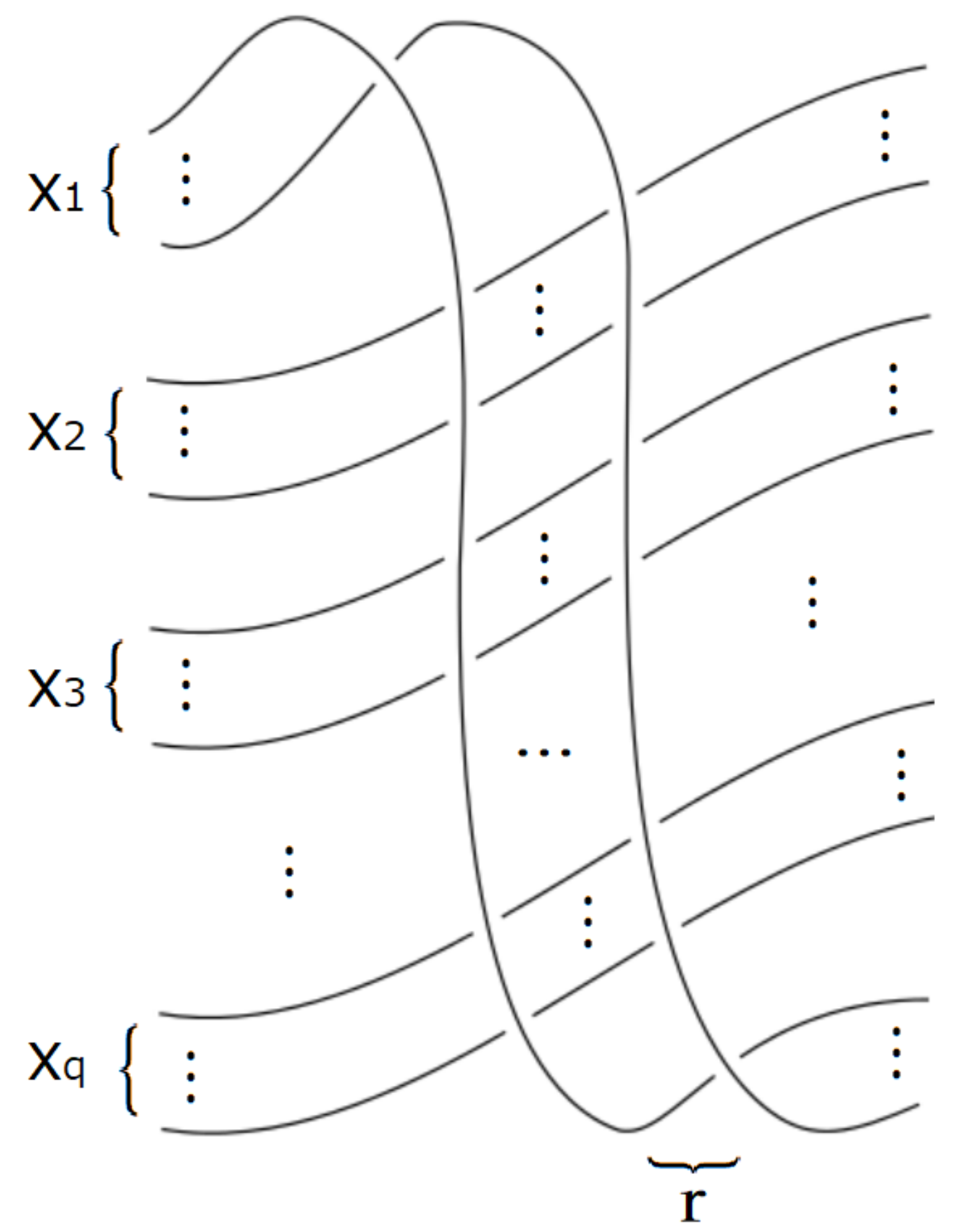}
\qquad 
\includegraphics[width=.43\textwidth]{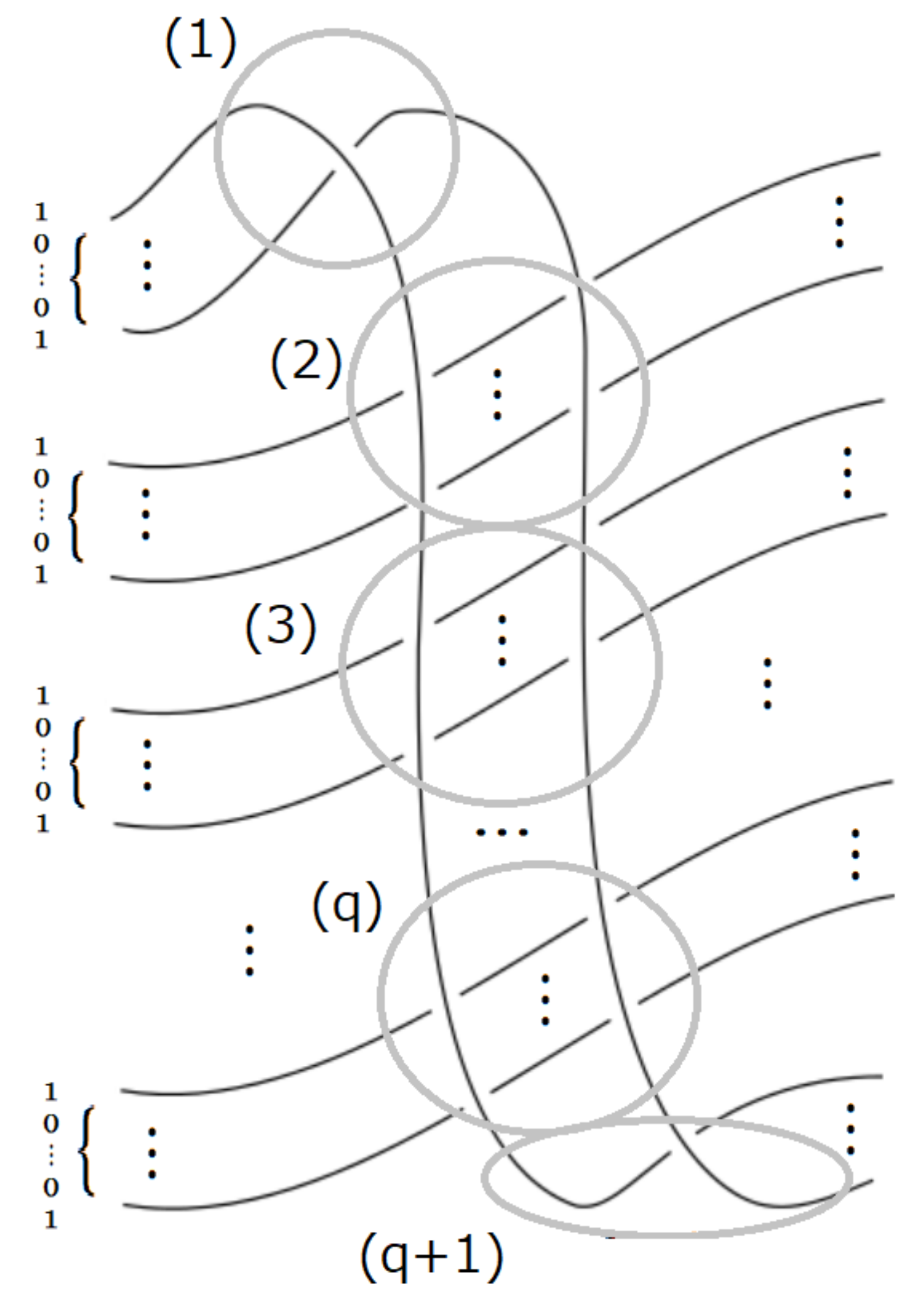}
\caption{}\label{torus9-1}
\end{center}
\end{figure}

We first find a local $\mathbb{Z}$-coloring $\gamma$ on the local diagram shown in Figure \ref{torus9-1}~(left). 
Let us start with setting $(\gamma(x_{i,1}),\gamma(x_{i,2}),\dots ,\gamma(x_{i,r})) = (1, 0, \dots, 0, 1)$ for any $i$. 
See Figure \ref{torus9-1}~(right).

Since $r$ is assumed to be even, as illustrated in Figures \ref{torus9-2}~(left), we can extend $\gamma$ on the arcs in the regions $(1)$ and $(q+1)$ in Figure \ref{torus9-1}~(right), and, as illustrated in Figures \ref{torus9-2}~(right), we can extend $\gamma$ on the arcs in the regions $(2), (3), \dots, (q)$ in Figure \ref{torus9-1}~(right). 
Thus $\gamma$ can be extended on all the arcs in the local diagram as depicted in Figure \ref{torus9-1-2}.

\begin{figure}[H]
\begin{center}
\includegraphics[width=.575\textwidth]{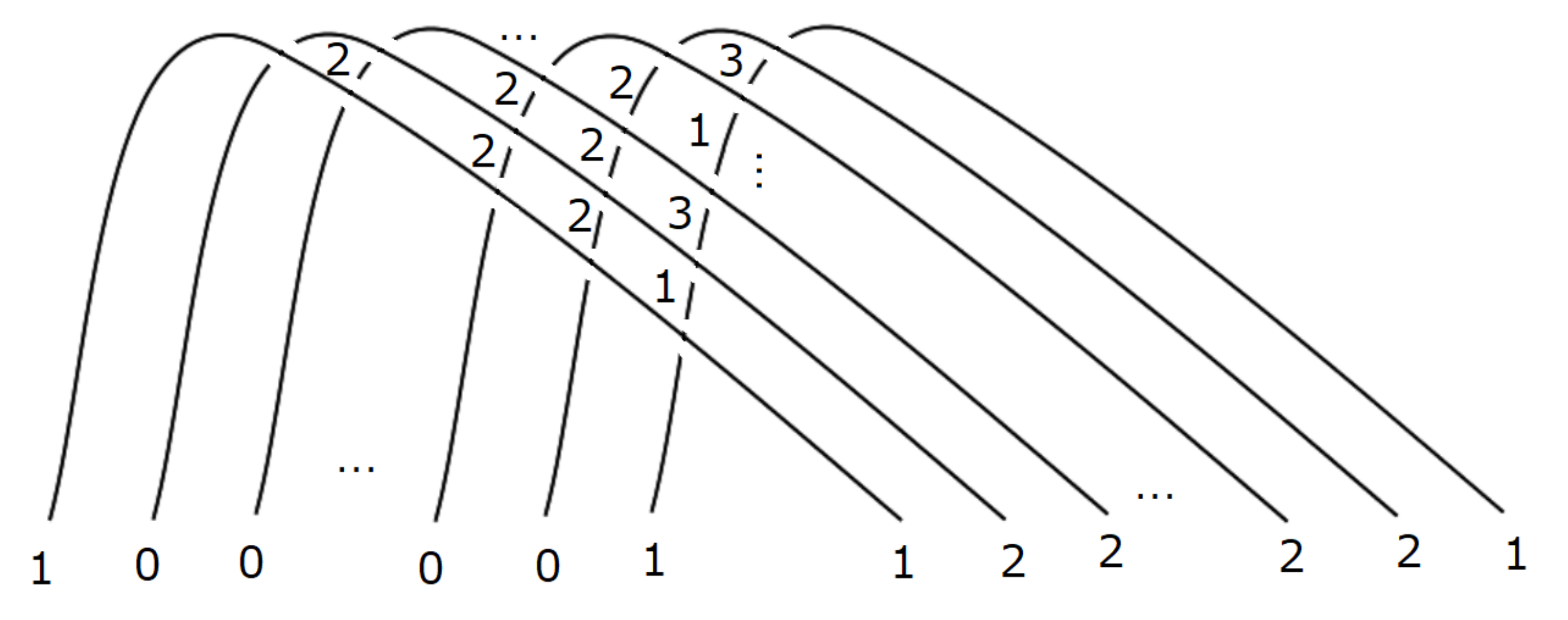}
\quad 
\includegraphics[width=.375\textwidth]{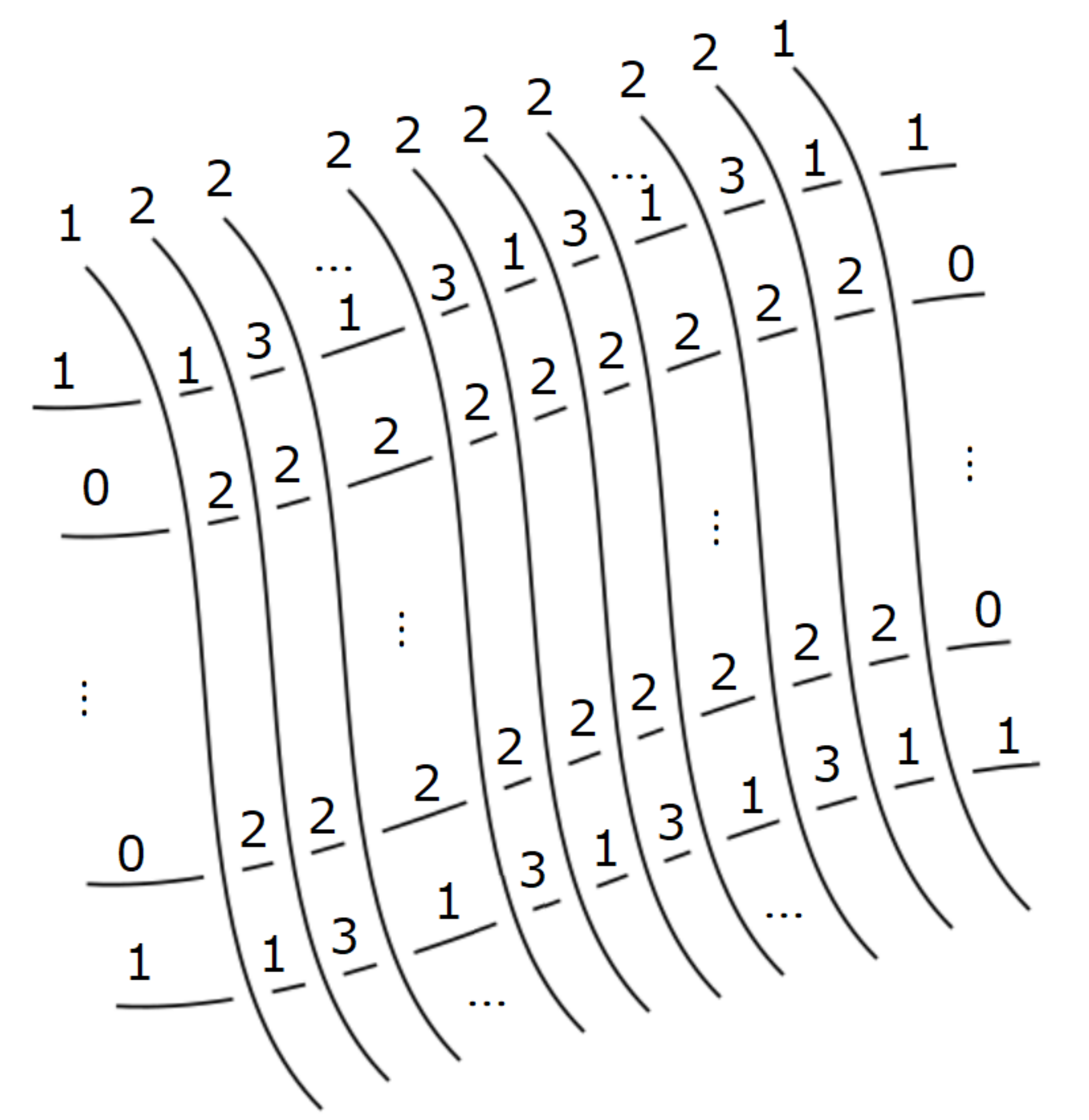}
\caption{}\label{torus9-2}
\end{center}
\end{figure}

\begin{figure}[H]
\begin{center}
\includegraphics[width=.45\textwidth]{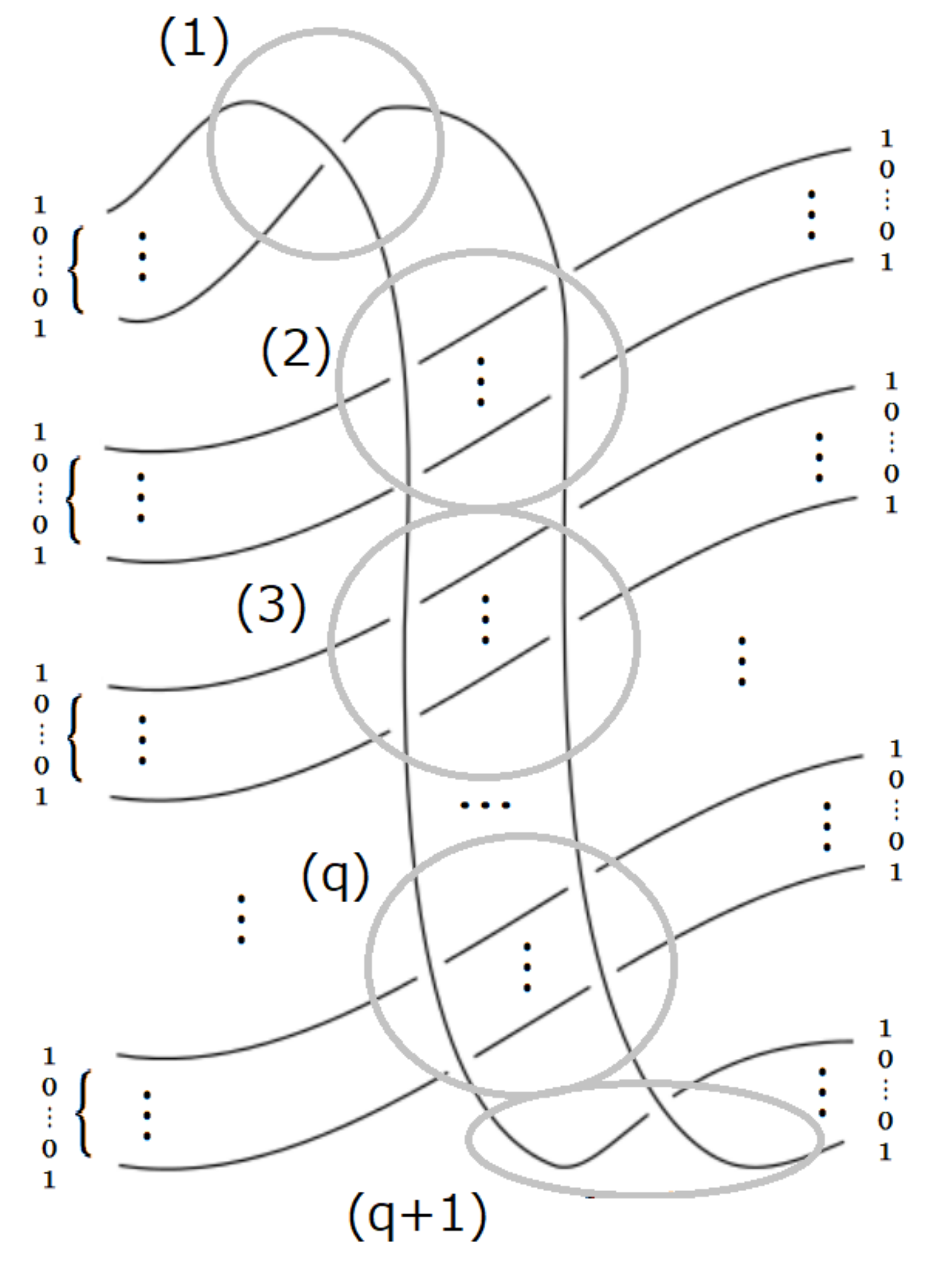}
\caption{}\label{torus9-1-2}
\end{center}
\end{figure}

\noindent Since $D$ is composed of $p$ copies of the local diagram in Figure \ref{torus9-1}~(left), it concludes that $D$ admits a $\mathbb{Z}$-coloring with only four colors $0$, $1$, $2$, and $3$. 

Since the torus link is non-splittable, $mincol_{\mathbb{Z}}(D)$ must be at least four by \cite[Theorem 3.1]{IchiharaMatsudo2}, and we conclude that $mincol_{\mathbb{Z}}(D) = 4$.
\end{proof}


\section{Five colors case}\label{sec:FiveColors}

In this section, we prove the following, which asserts the second half of the statement of Theorem~\ref{main-thm}. 

\begin{theorem}\label{q-even-thm}
Let $p, q,$ and $r$ be non-zero integers such that $p$ and $q$ are relatively prime, $|p|\geq q\geq 1$, and $r\geq 2$. 
Let $D$ be the standard diagram of $T(pr,qr)$ illustrated by Figure~\ref{torus}. 
Suppose that $T(pr,qr)$ is $\mathbb{Z}$-colorable, or, equivalently, $pr$ or $qr$ is even. 
Then $mincol_\mathbb{Z}(D)=5$ if $r$ is odd. 
\end{theorem}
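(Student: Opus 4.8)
The plan is to prove the two inequalities $mincol_\mathbb{Z}(D)\le 5$ and $mincol_\mathbb{Z}(D)\ge 5$ separately. Combined with the general lower bound $mincol_\mathbb{Z}(D)\ge 4$ valid for every non-splittable $\mathbb{Z}$-colorable link \cite[Theorem 3.1]{IchiharaMatsudo2}, it is the second inequality that is responsible for raising the value from $4$ to $5$ in the odd case, so the real content lies there.

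For the upper bound I would adapt the construction used in the proof of Theorem~\ref{thm_r:even}. As there, I regard $D$ as $p$ copies of the local diagram of Figure~\ref{torus9-1}~(left), with the $qr$ horizontal strands grouped into the $q$ subfamilies $\mathbf{x}_1,\dots,\mathbf{x}_q$, each $\mathbf{x}_i=(x_{i,1},\dots,x_{i,r})$ consisting of the $r$ mutually parallel strands at one longitudinal position. The reason the even-case construction works is that the profile $(\gamma(x_{i,1}),\dots,\gamma(x_{i,r}))=(1,0,\dots,0,1)$ is propagated through the twist regions by successive reflections $x\mapsto 2c-x$, and the number of reflection steps needed to return to a matching configuration has the same parity as $r$; for $r$ even it closes up inside the four colors $\{0,1,2,3\}$. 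When $r$ is odd I would start instead from a modified profile on each subfamily in which one strand carries a fifth color, chosen so that the reflection rule closes up consistently after the odd number of steps (e.g. using a symmetric palette so that the reflection centre itself becomes the extra color). I would then check, region by region exactly as in the even case and as in the analogues of Figures~\ref{torus9-2} and \ref{torus9-1-2}, that all crossing relations $2\gamma(a)=\gamma(b)+\gamma(c)$ hold and that the assignment is consistent around all $p$ copies, producing a non-trivial $\mathbb{Z}$-coloring with exactly five colors.

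For the lower bound I would argue by contradiction, assuming a non-trivial $\mathbb{Z}$-coloring $\gamma$ of $D$ whose image has at most four elements. The key structural fact is that at each crossing the two under-arcs are reflections of each other through the over-arc color, so along the $r$ parallel strands of a subfamily the color profile is transported entirely by such reflections. With only four available colors this transport is rigid, and I would show it forces the profile across the $r$ parallel strands into an alternating (equivalently palindromic) pattern. The decisive step is then to observe that such a profile can be identified with its image after one full meridional passage, and hence survive the closure of the $p$ copies, only when $r$ is even; for odd $r$ tracing this identification yields a sign/parity relation with no solution among four colors, the desired contradiction. The complete list of four-colorings supplied by Proposition~\ref{4col-prop}, established independently via rack colorings, makes this rigidity transparent and could be invoked to streamline the argument.

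The step I expect to be the main obstacle is exactly this rigidity in the lower bound: showing that restricting to four colors genuinely \emph{forces} the alternating profile on the $r$ parallel strands, rather than merely permitting it. To control this I would combine an extremal argument on the largest and smallest colors used (since the over-arc color is always the average of the two under-arc colors, the extreme values can propagate only in a tightly constrained way) with the explicit local reflection pattern of Figures~\ref{torus9-1}--\ref{torus9-1-2}. Once the alternating profile is forced, the parity obstruction for odd $r$ is immediate, and together with the five-color construction this establishes $mincol_\mathbb{Z}(D)=5$.
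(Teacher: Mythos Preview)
Your overall two-step plan (explicit five-colour construction for the upper bound, four-colour contradiction for the lower bound) matches the paper, but both halves of your sketch have genuine gaps when compared with what actually has to be done.

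\medskip
\textbf{Upper bound.} You propose a single ``modified profile with a fifth colour'' and then close up as in the even case. The paper cannot do this uniformly: since $r$ is odd and one of $pr,qr$ is even, exactly one of $p,q$ is even, and the two cases require \emph{different} constructions. When $p$ is even one keeps the profile $(1,0,\dots,0,1)$ but the propagation through the odd-$r$ twist now produces colours up to $4$, and consistency around the torus is obtained by gluing each local block to its $\pi$-rotated copy (which is why $p$ even is needed). When $q$ is even the profile on the subfamilies must alternate between $(2,1,\dots,1,0)$ and $(0,1,\dots,1,2)$, using the evenness of $q$. Your plan does not anticipate this case split, and a single profile of the kind you describe will not close up in both situations.

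\medskip
\textbf{Lower bound.} Your suggestion to invoke Proposition~\ref{4col-prop} is circular: that proposition is formulated only for $r$ even and its very first sentence appeals to Theorem~\ref{main-thm} (hence to the present theorem) to assert that four-colour colourings exist only when $r$ is even. It is not ``established independently'' of Theorem~\ref{q-even-thm}. Your remaining idea --- forcing an alternating profile and then deriving a parity obstruction --- is not what the paper does and, as stated, is too vague: the four-colour profiles on the parallel strands (cf.\ the sets $A^{(4)}_{01},A^{(4)}_{12},A^{(4)}_{23}$) are not literally alternating, so ``alternating'' is not the right rigidity statement. The paper instead argues as follows. With ${\rm Im}\,\gamma=\{0,1,2,3\}$, Lemma~\ref{lem1} forces every over-arc to be coloured $1$ or $2$, so each component is entirely odd-coloured or entirely even-coloured. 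Since the number of components is the odd number $r$, after the symmetry $X\mapsto -X+3$ one may assume the number of over-arcs coloured $1$ among the $r$ parallel over-arcs is odd. If $q=1$, follow an arc coloured $2$ under these $r$ over-arcs: passing under an odd number of $1$'s and some $2$'s turns it into $0$, producing an over-arc coloured $0$, a contradiction. If $q\ge 2$, the same parity makes the map $f$ taking $\gamma(\mathbf{x}_i)$ to $\gamma(\mathbf{y}_i)$ satisfy $f(0)=2$, $f(2)=0$, hence $f(X)=-X+2$, and then $f(3)=-1$ forces a colour outside $\{0,1,2,3\}$. This concrete parity-of-ones mechanism is the missing idea in your outline.
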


To prove this, we recall the following lemma, which was obtained in \cite{IchiharaMatsudo2}.

\begin{lem}\label{lem1}
For a $\mathbb{Z}$-coloring $\gamma$ with $0=\min {\rm Im}\,\gamma$,
if an over arc at a crossing is colored by $0$, then the under arcs at the crossing are colored by $0$.
For a $\mathbb{Z}$-coloring $\gamma$ with $M=\max {\rm Im}\,\gamma$,
if an over arc at a crossing is colored by $M$, 
then the under arcs at the crossing are colored by $M$.
\end{lem}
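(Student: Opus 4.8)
The plan is to argue entirely locally at a single crossing, using only the defining relation $2\gamma(a) = \gamma(b) + \gamma(c)$ together with the extremality of the distinguished color. I would treat the two assertions in turn, although the second reduces to the first by a symmetry that I will record at the end.

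For the minimum case, suppose $0 = \min \mathrm{Im}\,\gamma$ and that the over arc $a$ at the crossing satisfies $\gamma(a) = 0$. The coloring condition gives $\gamma(b) + \gamma(c) = 2\gamma(a) = 0$. Since $0$ is the minimum of the image, every color is nonnegative; in particular $\gamma(b) \geq 0$ and $\gamma(c) \geq 0$. A sum of two nonnegative integers that vanishes forces each summand to vanish, so $\gamma(b) = \gamma(c) = 0$, as claimed.

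For the maximum case, the cleanest route is to observe that if $\gamma$ is a $\mathbb{Z}$-coloring with $M = \max \mathrm{Im}\,\gamma$, then $\gamma' := M - \gamma$ is again a $\mathbb{Z}$-coloring, because $2\gamma'(a) = \gamma'(b) + \gamma'(c)$ follows by subtracting $2\gamma(a) = \gamma(b) + \gamma(c)$ from the identity $2M = M + M$. Moreover $\min \mathrm{Im}\,\gamma' = M - \max \mathrm{Im}\,\gamma = 0$, and $\gamma'(a) = 0$ exactly when $\gamma(a) = M$. Applying the minimum case to $\gamma'$ then yields $\gamma'(b) = \gamma'(c) = 0$, that is $\gamma(b) = \gamma(c) = M$. (Alternatively one argues directly: $\gamma(b) + \gamma(c) = 2M$ with $\gamma(b) \leq M$ and $\gamma(c) \leq M$ forces both to equal $M$.)

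There is no genuine obstacle here: the statement is an immediate consequence of the affine coloring relation, once one records that the image, having a minimum (respectively a maximum) by hypothesis, is bounded on the relevant side. The only point deserving a line of care is the passage from ``a sum of two extremally bounded terms attains the extreme value'' to ``both terms are extremal,'' which is precisely the elementary fact about nonnegative integers summing to zero that is used in the minimum case and transported to the maximum case by the substitution $\gamma \mapsto M - \gamma$.
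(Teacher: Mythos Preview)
Your argument is correct. The paper itself does not supply a proof of this lemma; it is simply recalled from \cite{IchiharaMatsudo2}, so there is no ``paper's own proof'' to compare against. Your local argument from the relation $2\gamma(a)=\gamma(b)+\gamma(c)$ together with the extremality bound is exactly the standard elementary proof, and the symmetry $\gamma\mapsto M-\gamma$ (or the direct inequality version you also record) handles the maximum case cleanly.
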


\begin{proof}[Proof of Theorem~\ref{q-even-thm}]
We first show that the minimal coloring number $mincol_{\mathbb{Z}}(D)$ of $D$ is at most five, i.e., $mincol_{\mathbb{Z}}(D) \le 5$. 

In the following, we will find a $\mathbb{Z}$-coloring $\gamma$ with five colors on $D$ by assigning colors on the arcs of $D$. 
In the same way as in the proof of Theorem~\ref{thm_r:even}, we find a local $\mathbb{Z}$-coloring $\gamma$ on the local diagram shown in Figure \ref{torus9-1}~(left), and extend it to whole the diagram. 
Note that if $r$ is odd and $T(pr,qr)$ is $\mathbb{Z}$-colorable, then either $p$ or $q$ must be even.

First, suppose that $p$ is even. 
In this case, we start with setting $(\gamma(x_{i,1}),\gamma(x_{i,2}),\dots ,\gamma(x_{i,r})) = (1, 0, \dots, 0, 1)$ for any $i$. 
See Figure \ref{torus9-1}~(right) again. 
As illustrated in Figure \ref{torus9-6-2}~(left) and (right), we can extend $\gamma$ on the arcs in the regions $(1)$ and $(q+1)$ in Figure \ref{torus9-1}~(right). 

\begin{figure}[H]
\begin{center}
\includegraphics[width=.45\textwidth]{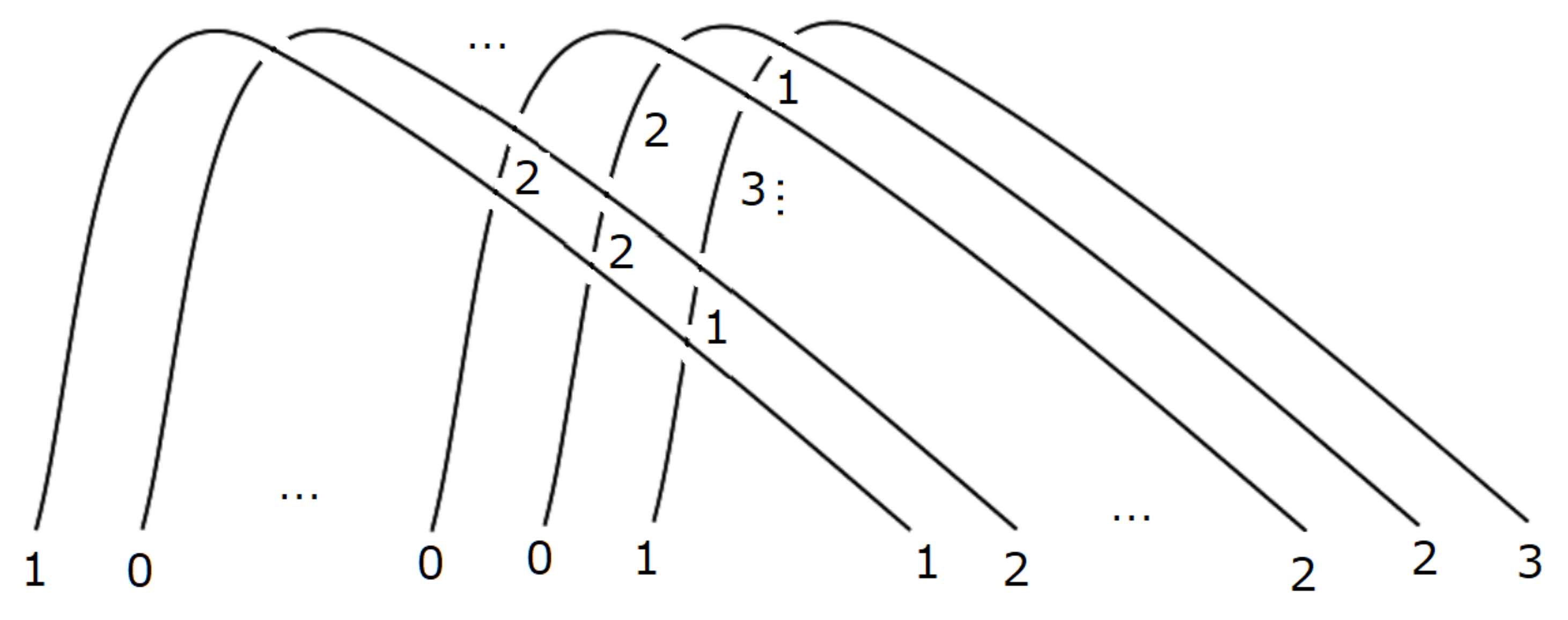}
\qquad 
\includegraphics[width=.45\textwidth]{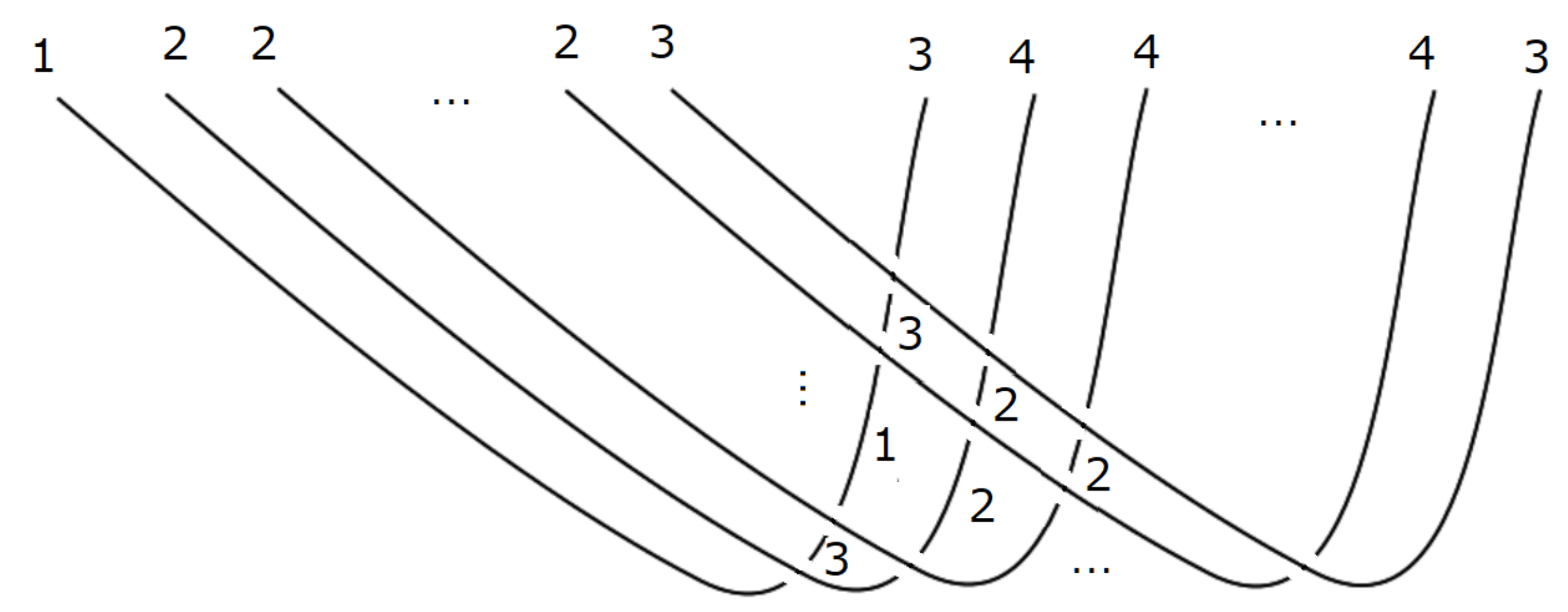}
\caption{}\label{torus9-6-2}
\end{center}
\end{figure}

\noindent 
As illustrated in Figure~\ref{torus9-7}~(left), we can extend $\gamma$ on the arcs in the regions $(2), (3), \dots, (q)$ in Figure \ref{torus9-1}~(right). 
Then, as shown in Figure \ref{torus9-7}~(right), we can extend $\gamma$ to all the arcs in Figure \ref{torus9-1}~(right). 

\begin{figure}[H]
\begin{center}
\includegraphics[width=.45\textwidth]{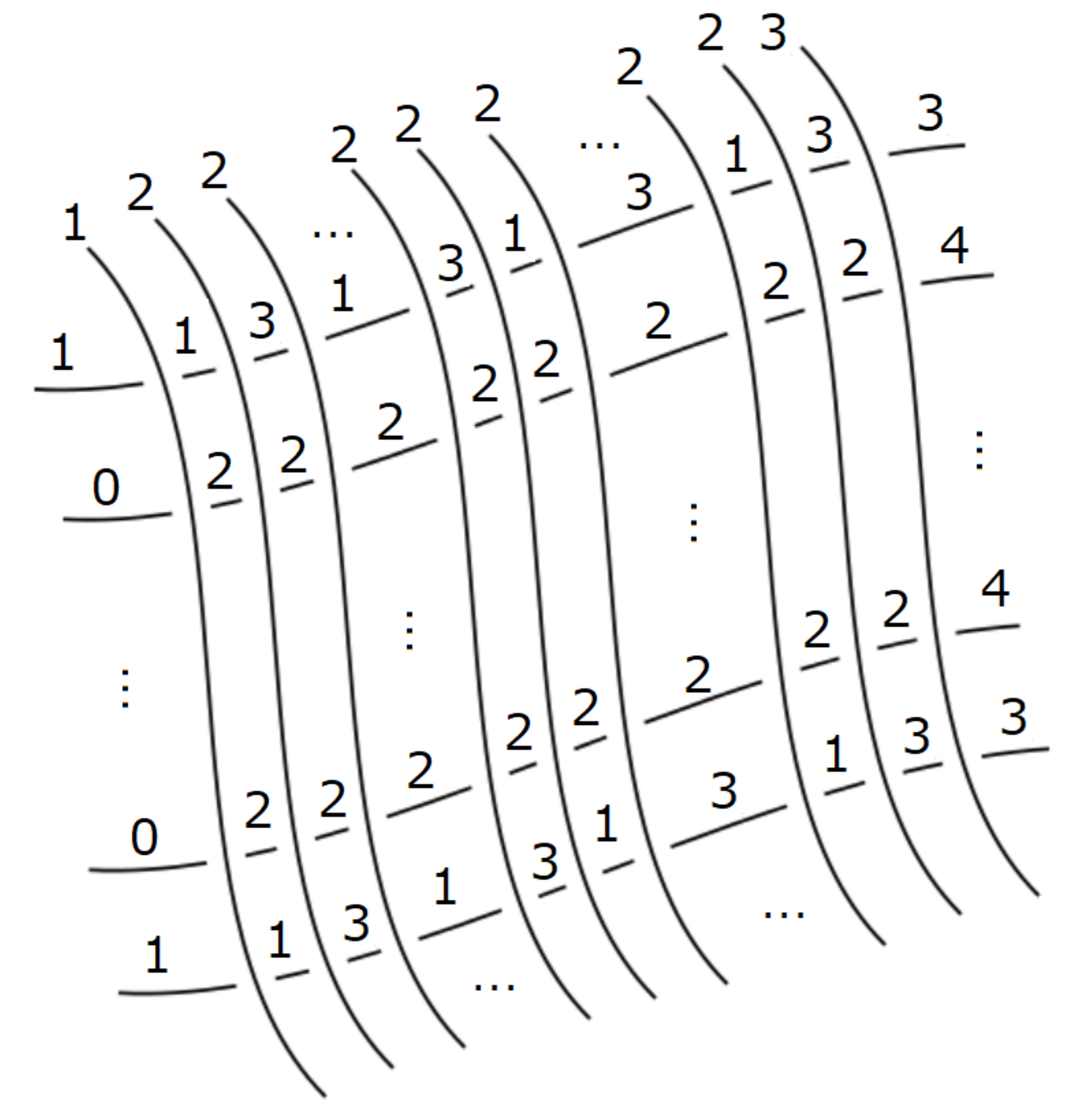}
\quad
\includegraphics[width=.45\textwidth]{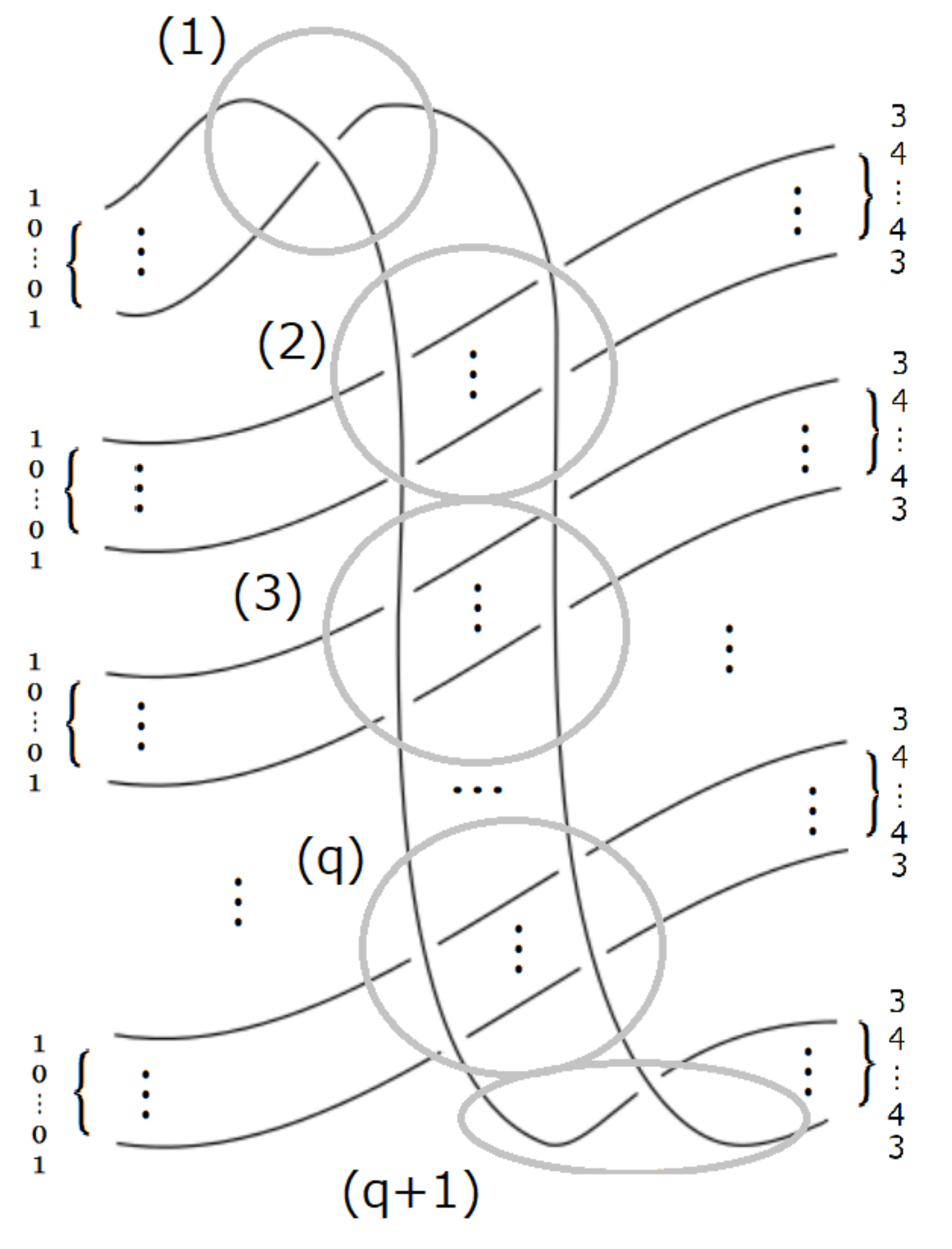}
\caption{}\label{torus9-7}
\end{center}
\end{figure}

\noindent Without contradicting to the condition of the coloring, we can connect  the local diagram in Figure~\ref{torus9-7}~(right) with the image of $\pi$-rotation. 
See Figure \ref{torus9-15}.

\begin{figure}[H]
\begin{center}
\includegraphics[width=.9\textwidth]{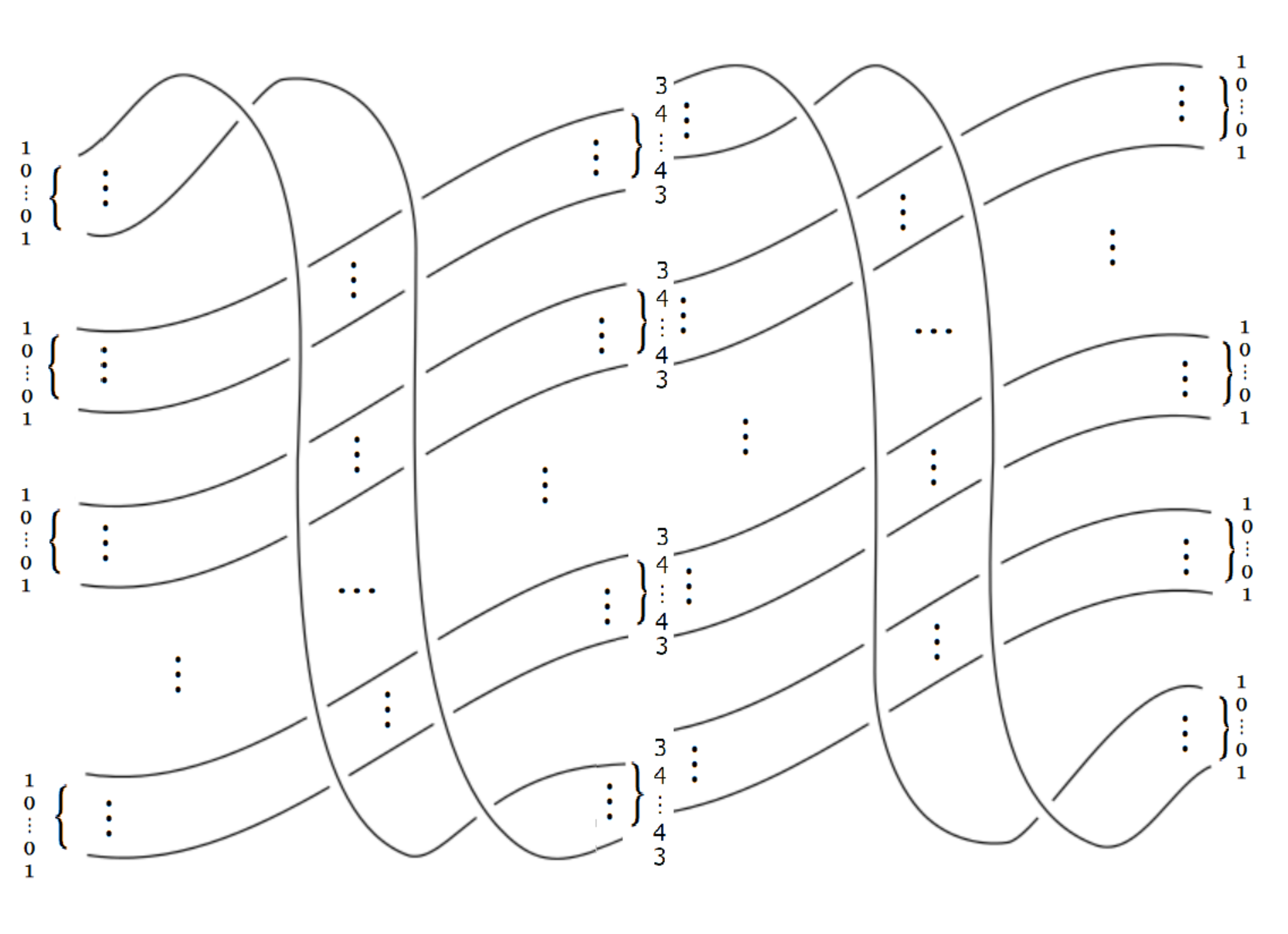}
\caption{}\label{torus9-15}
\end{center}
\end{figure}

\noindent That is, since $p$ is assumed to be even, $D$ admits a $\mathbb{Z}$-coloring which is composed by connecting $p/2$ local diagrams illustrated by Figure \ref{torus9-15}.
It concludes that the colors of this $\mathbb{Z}$-coloring are $\{0, 1, 2, 3, 4\}$, that is, the $\mathbb{Z}$-coloring is represented by five colors.

Next, suppose that $r$ is odd and $q$ is even. 
In this case, we start with setting 
$$
(\gamma(x_{i,1}),\gamma(x_{i,2}),\dots ,\gamma(x_{i,r})) =
\begin{cases}
(2, 1,  \dots, 1, 0) & \text{ if $i$ is odd,} \\
(0, 1,  \dots, 1, 2) & \text{ if $i$ is even.}
\end{cases}
$$
See Figure~\ref{torus9-8}~(left). 

\begin{figure}[H]
\centering
$$\begin{tabular}{c}\includegraphics[width=.4\textwidth]{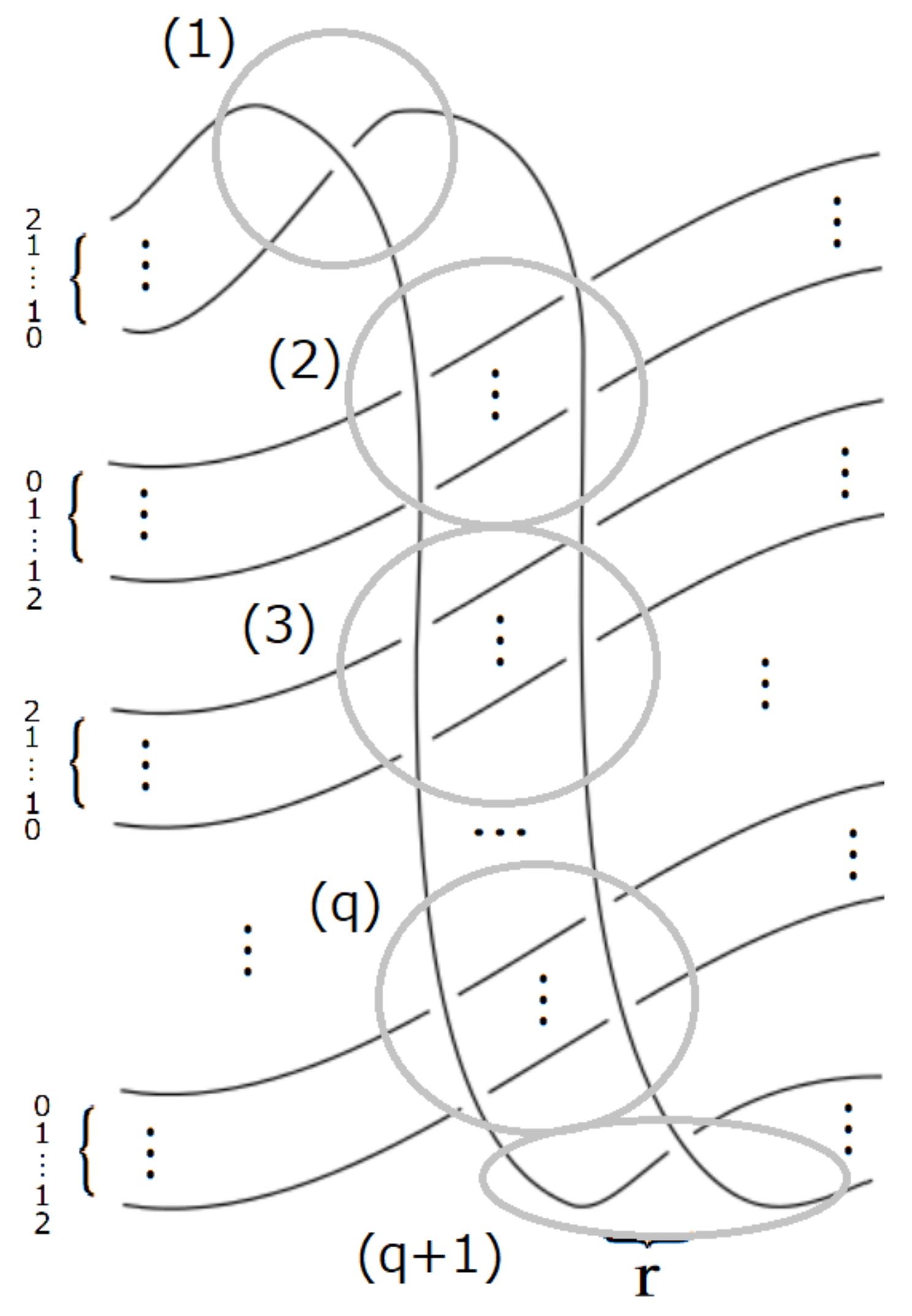}\end{tabular}
\quad
\begin{tabular}{c}\includegraphics[width=.5\textwidth]{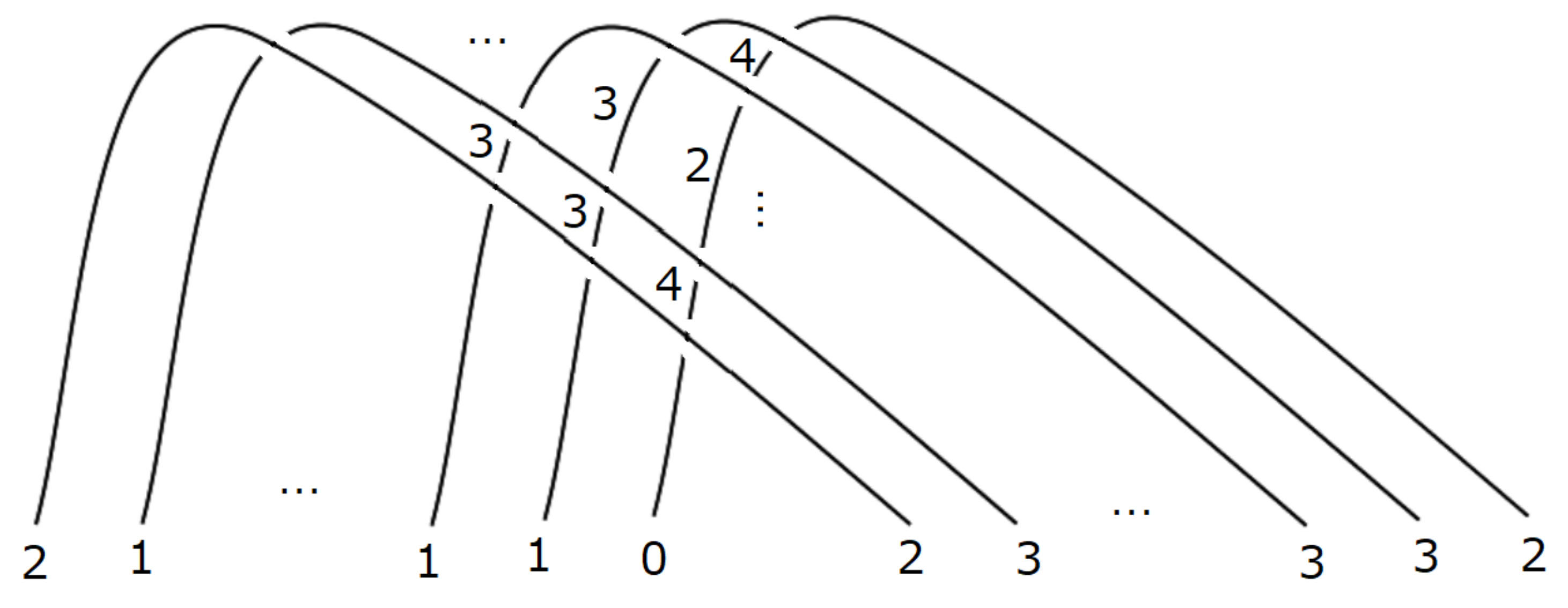}\end{tabular}$$
\caption{}\label{torus9-8}
\end{figure}

\noindent As illustrated in Figure \ref{torus9-8}~(right), we can extend $\gamma$ on the arcs in the regions $(1)$ and $(q+1)$ in Figure \ref{torus9-1}~(right). 
And, as illustrated in Figure \ref{torus9-10}, 
we can extend $\gamma$ on the arcs in the regions $(2), (3), \dots, (q)$ in Figure \ref{torus9-1}. 

\begin{figure}[H]
\begin{center}
\includegraphics[width=\textwidth]{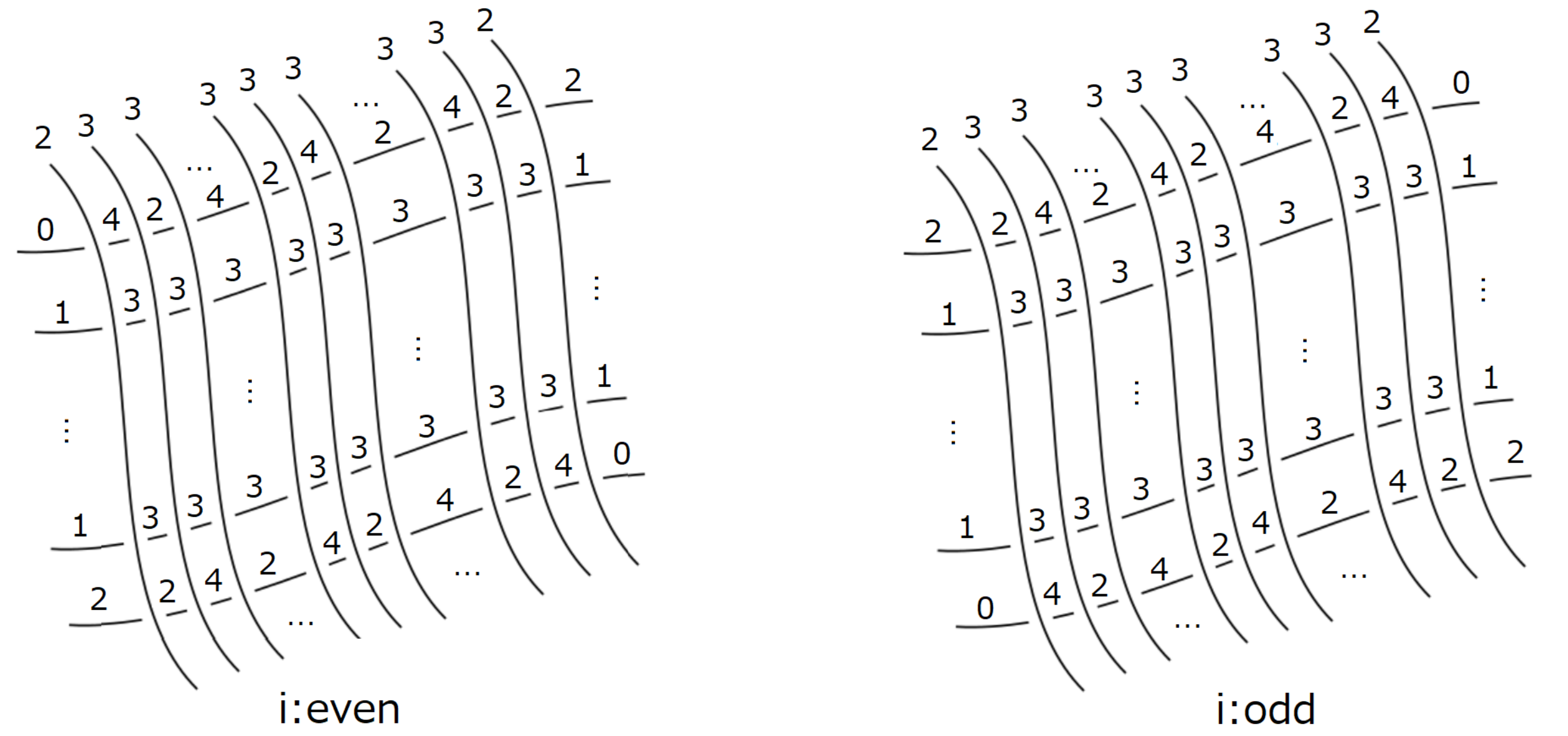}
\caption{}\label{torus9-10}
\end{center}
\end{figure}

\noindent Now, $\gamma$ can be extended on all the arcs in Figure \ref{torus9-8-2}.

\begin{figure}[H]
\begin{center}
\includegraphics[width=.5\textwidth]{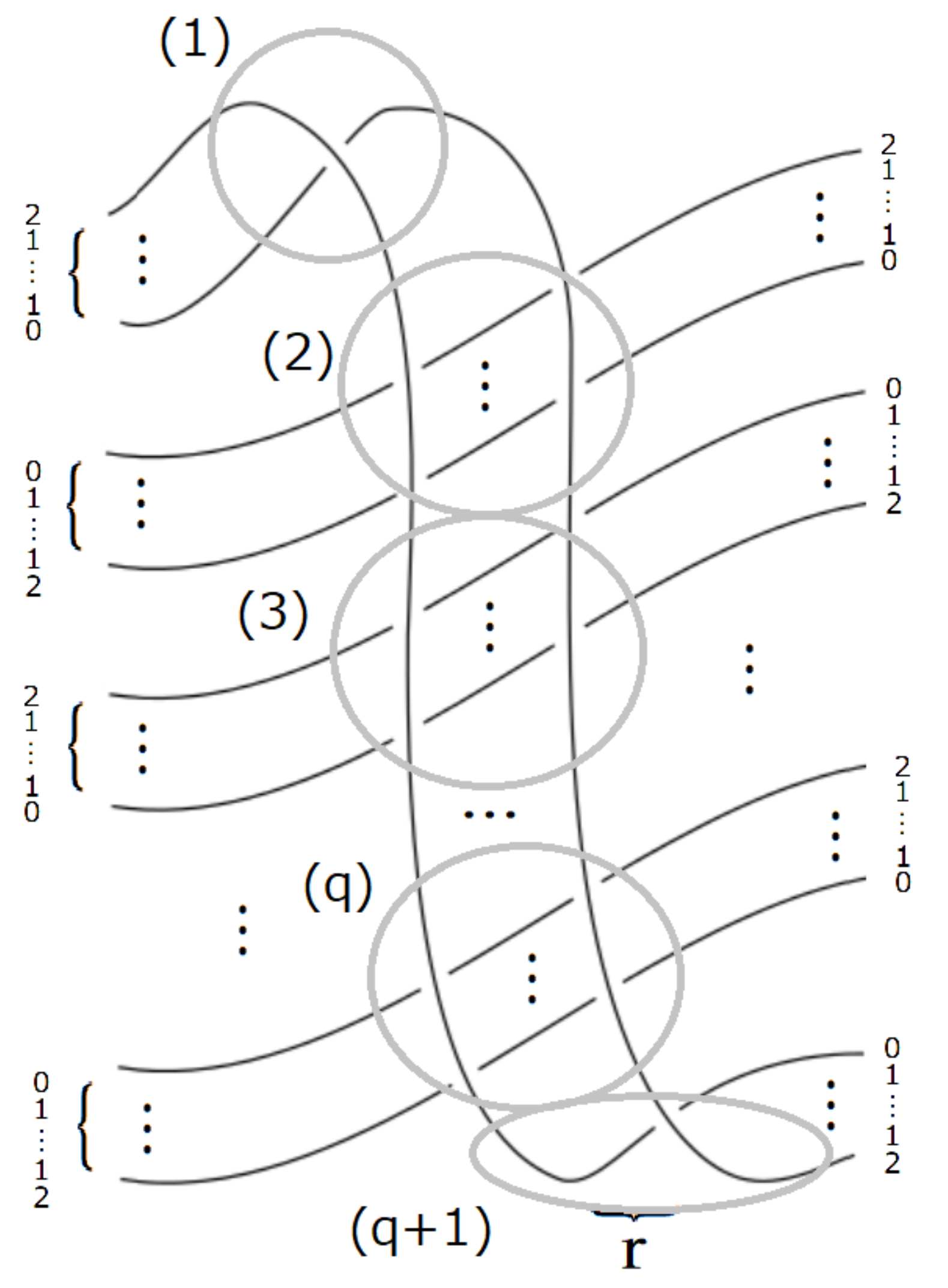}
\caption{}\label{torus9-8-2}
\end{center}
\end{figure}

\noindent Since $D$ is composed of $p$ copies of the local diagram in Figure \ref{torus9-1}, it concludes that $D$ admits a $\mathbb{Z}$-coloring with only five colors $0$, $1$, $2$, $3$, and $4$. 

Consequently, we obtain that $mincol_{\mathbb{Z}}(D) \le 5$. 

\par
We next show that the minimal coloring number $mincol_{\mathbb{Z}}(D)$ of $D$ is at least five, i.e.,  $mincol_{\mathbb{Z}}(D) \ge 5$. 

Suppose for a contradiction that the diagram $D$ as shown in Figure \ref{torus} admits 
a non-trivial $\mathbb{Z}$-coloring $\gamma$ with only four colors. 
By \cite[Theorem 3.2]{IchiharaMatsudo2}, we may assume that the image of $\gamma$ is $\{0, 1, 2, 3\}$. 
By Lemma~\ref{lem1}, we see that the over arcs are colored by $1$ or $2$, otherwise $\gamma$ have to be trivial. 
Thus there exist only three ways to color a crossing: 
\begin{figure}[H]
\centering
\includegraphics[width=7cm]{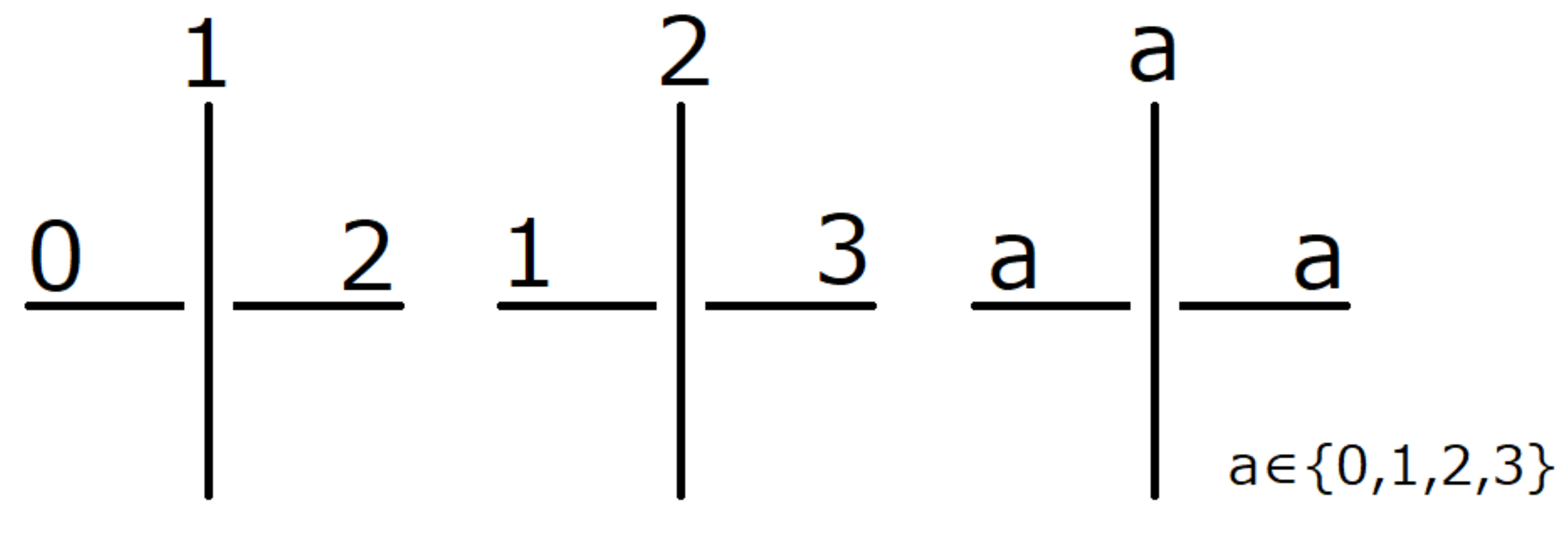}
\caption{Colors at a crossing}\label{crossings}
\end{figure}

\noindent Here we see that, in a component including an arc colored by 1 or 3 (resp. 0 or 2), the arcs are always colored by odd (resp. even) numbers, by the condition of the $\mathbb{Z}$-coloring. 

Then, since the number of the component is the odd number $r$, 
either the number of the components colored by odd numbers or that by even numbers is odd.
Since the linear function $X \mapsto -X + 3$ on $\mathbb{Z}$ switches these two cases, 
we only have to consider the former case; 
then the number of the over arcs colored by $1$ is odd 
in the $r$ parallel over arcs as in Figure \ref{torus10}.

\begin{figure}[H]
\begin{center}
\includegraphics[height=7cm,clip]{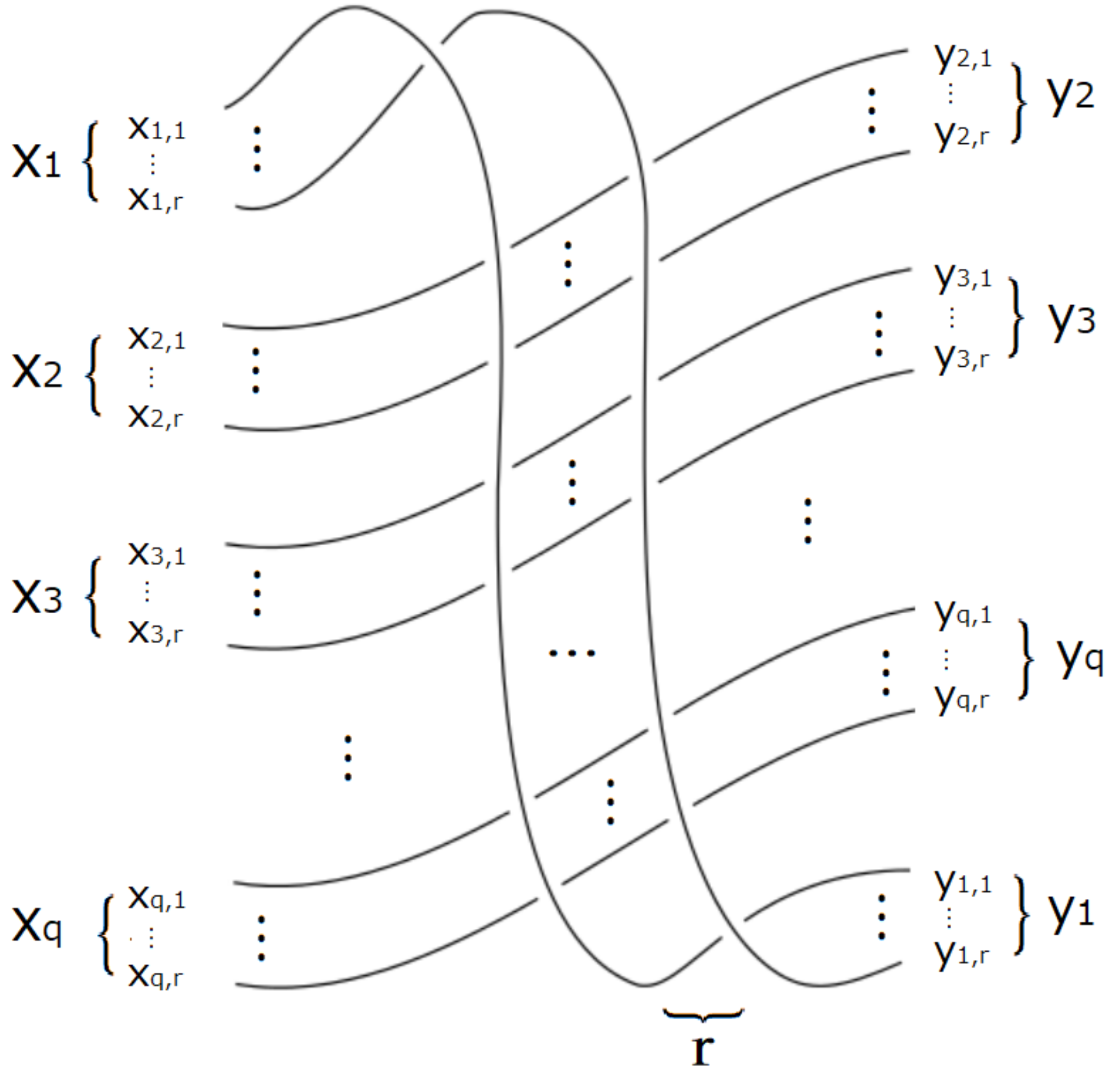}
\caption{}\label{torus10}
\end{center}
\end{figure}


In the case $q=1$, we consider the $r$ parallel arcs $\mathbf{x} =\{x_1, \dots ,x_r\}$ 
as shown in Figure \ref{torus11}. 

\begin{figure}[H]
\begin{center}
\includegraphics[height=3cm,clip]{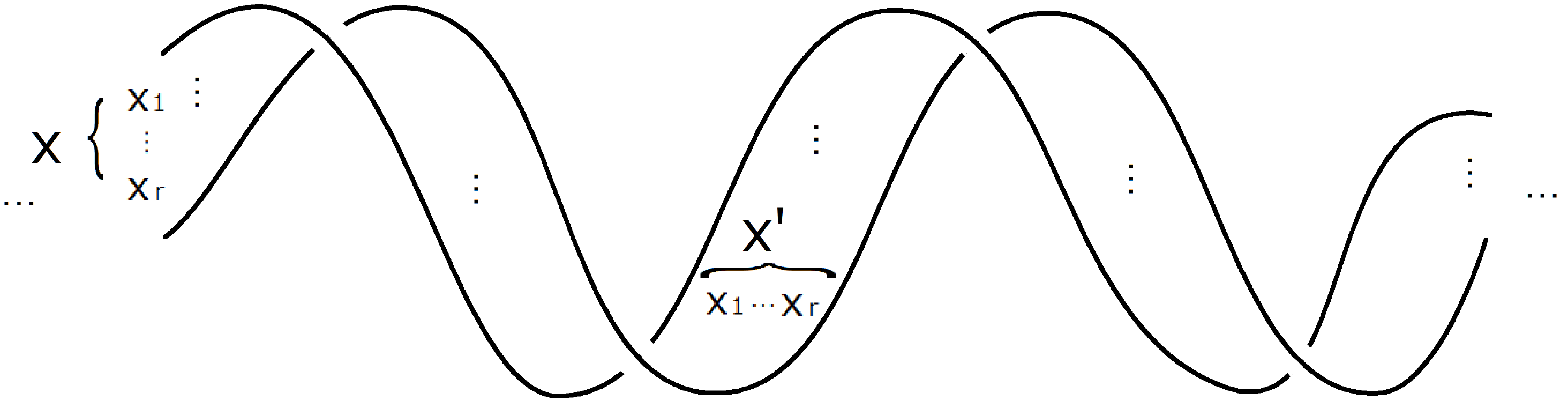}
\caption{}\label{torus11}
\end{center}
\end{figure}

\noindent The colors on $\mathbf{x}$ is represented by $\gamma(\mathbf{x})=(\gamma(x_1), \dots ,\gamma(x_r))$.  

By the assumption that $\gamma$ is a non-trivial $\mathbb{Z}$-coloring, the diagram has an over arc colored by $2$. 
Hence we can label $\mathbf{x}$ to have $x_r$ be colored by $2$. 
That is, we consider the case the colors of $\mathbf{x}$ is $\gamma(\mathbf{x})=(a_1, a_2,\dots,a_{r-1},2)$, 

The arc $x_r$ turns into $x'_r$ by passing through the odd arcs colored by $1$ and even arcs colored by $2$. 
Here, as the diagram has exactly four colors $0, 1, 2, 3$, the arc $x'_r$ is colored by $0$.
That is, there exists an over arc colored by $0$ as shown in Figure \ref{torus12}.

\begin{figure}[H]
\begin{center}
\includegraphics[height=3cm,clip]{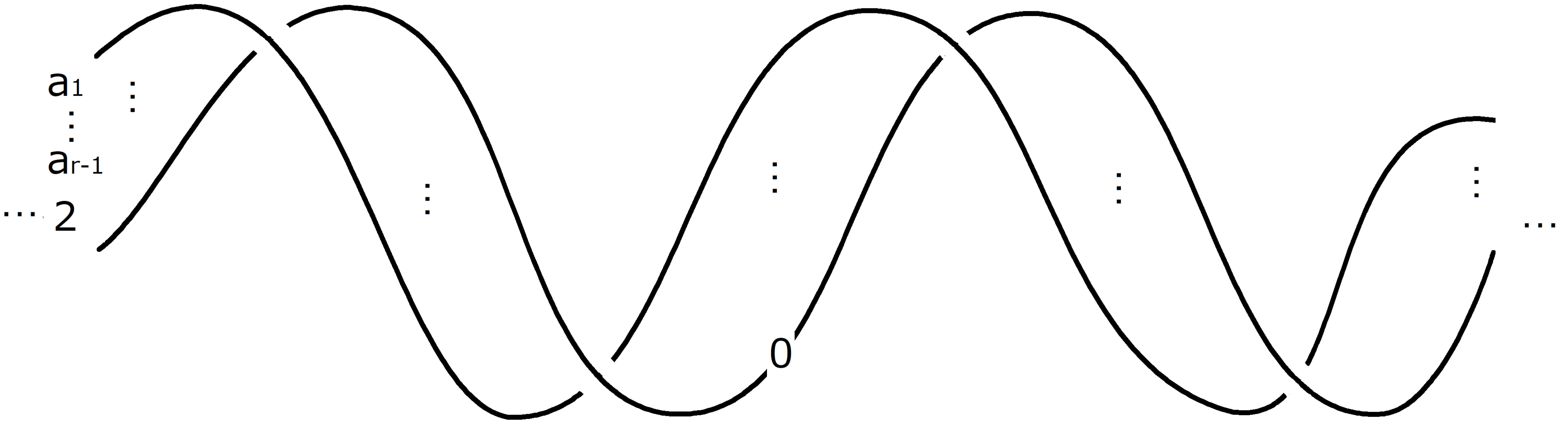}
\caption{}\label{torus12}
\end{center}
\end{figure}

\noindent This is a contradiction; the colors on over arcs are $1$ or $2$.


In the case $q\ge 2$, 
We divide $qr$ parallel arcs into $q$ subfamilies $\mathbf{x}_1, \dots, \mathbf{x}_q$ 
and $\mathbf{y}_1, \dots, \mathbf{y}_q$ as depicted in Figure \ref{torus10}. 
The colors on $\mathbf{x}_i=(x_{i,1}, \dots ,x_{i,r})$ are represented by $\gamma(\mathbf{x}_i)=(\gamma(x_{i,1}), \dots ,\gamma(x_{i,r}))$.  

By the condition of a $\mathbb{Z}$-coloring, the colors on $\mathbf{y}_i$ are expressed by using a linear function $f$
as $\gamma(\mathbf{y}_i) = \{f(\gamma(x_{i,1})), \dots ,f(\gamma(x_{i,r}))\}$ with $i=2,3,\dots,q$.
Then we see $f(0)=2$ and $f(2)=0$, as the number of the over arcs colored by $1$ is odd in the $r$ over arcs in Figure \ref{torus10}.

\begin{figure}[H]
\begin{center}
\includegraphics[height=4cm,clip]{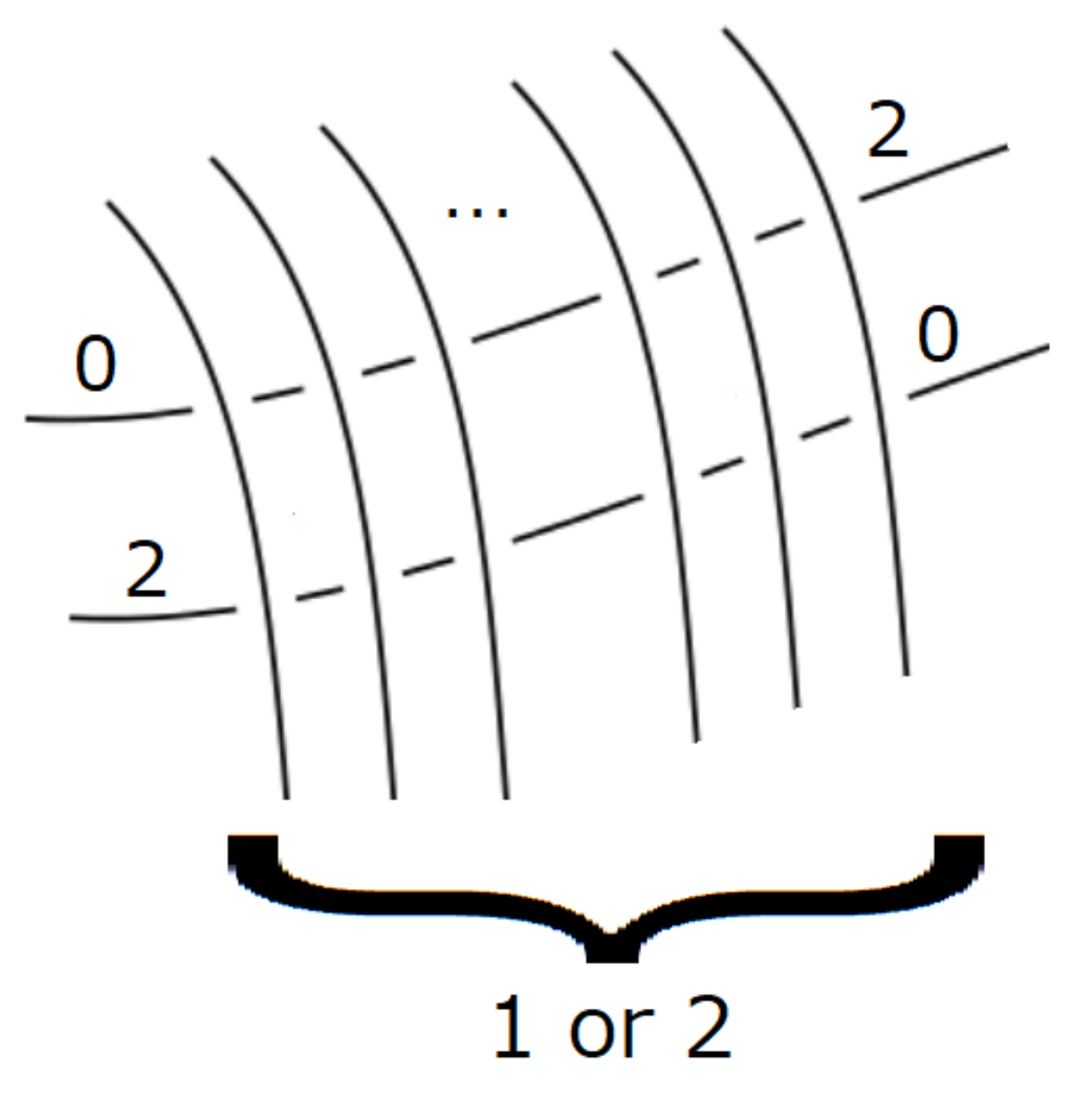}
\caption{}\label{torus14}
\end{center}
\end{figure}

Here we obtain $f(X)=-X+2$ and see $f(3)=-1$. 
That is, if there exists an arc colored by $3$ in any parallel arcs $\mathbf{x}_i$ ($i=2,\dots,q$), there exists an arc colored by $-1$ in the parallel arcs $\mathbf{y}_{i}$. 
It is a contradiction since the image of $\gamma$ is $\{0,1,2,3\}$.

By the assumption, the diagram has an arc colored by $3$.
If there exist no arcs colored by $3$ in parallel arcs $\mathbf{x}_i$, 
we relabel the arcs to have the arc colored by $3$ be one of parallel arcs. 
Furthermore, if there exist an arc colored by $3$ in $\mathbf{x}_1$, 
we see the outside of the local diagram as shown in Figure \ref{torus10}  
and we relabel to have $\mathbf{x}_1$ as $\mathbf{y}_1$. 
Since $f$ is an involution, this completes the theorem.

\end{proof}


\section{$\mathbb{Z}$-colorings of torus link diagrams}\label{sec:description}

In this section, we give complete classifications of all $\mathbb{Z}$-colorings of the standard diagram of $T(a,b)$ (Proposition~\ref{col-prop}) and of all $\mathbb{Z}$-colorings by only four colors of $T(a,b)$ (Proposition~\ref{4col-prop}). 

To achieve these, we prepare a theorem on $\mathbb{Z}$-colorings of $n$-parallels of knots. 
For a knot diagram $D$, we obtain another diagram $D^{(n)}$ by replacing the string with $n$ parallel copies of it; we call $D^{(n)}$ the $n$-\textit{parallel} of $D$. 
Remark that $D^{(n)}$ represents the $(nw,n)$-cable link of the knot represented by $D$, where $w$ is the writhe of $D$. 
In the following, we denote the set of $\mathbb{Z}$-colorings of a link diagram $D$ by ${\rm Col}_{\mathbb{Z}}(D)$.

\begin{theorem}\label{parallel-thm}
Let $D$ be an oriented knot diagram and $D^{(n)}$ the $n$-parallel of $D$. We fix any arc of $D$ and let $\gamma_1, \dots, \gamma_n$ be the corresponding $n$ arcs of $D^{(n)}$. We define a homomorphism $r: {\rm Col}_{\mathbb{Z}}(D^{(n)}) \to \mathbb{Z}^n$ as
\[ r(\mathcal{C}) = (\mathcal{C}(\gamma_1), \dots, \mathcal{C}(\gamma_n)). \]
Then, $r$ is injective and the image ${\rm Im}\,r$ of $r$ is equal to
\[ \left\{ \begin{array}{ll}
\{(a_1, \dots, a_n) \mid w(a_1 - a_2 + a_3 - \cdots - a_n) = 0\} & \text{if $n$ is even,}\\[2pt]
\{(a, \dots, a) \mid a \in \mathbb{Z}\} & \text{if $n$ is odd and $w$ is odd,}\\[2pt]
\mathbb{Z}^n & \text{if $n$ is odd and $w$ is even,}
\end{array} \right. \]
where $w$ is the writhe of $D$.
\end{theorem}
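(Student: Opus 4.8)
The plan is to regard a $\mathbb{Z}$-coloring as a labeling of the arcs obeying the core-quandle rule: when a strand of colour $y$ runs under a strand of colour $x$, it emerges with colour $2x-y$. Since $D^{(n)}$ is an $n$-cable following $D$, I would track the \emph{colour vector} $v=(v_1,\dots,v_n)\in\mathbb{Z}^n$ of the cable (the colours of the $n$ parallel strands, in transverse order) as a point of $D$ is traversed once. The crux is a local lemma describing one cable crossing of $D$: when the over-cable has colour vector $u$ and the under-cable has colour vector $v$, the over-cable is unchanged, while each under-strand passes under all $n$ over-strands \emph{in the same order}, so the under-cable is sent by the single affine map $v\mapsto (-1)^n v + c\,\mathbf{1}$, where $\mathbf{1}=(1,\dots,1)$ and $c=\pm 2\,\ell(u)$ with $\ell(u)=u_1-u_2+\cdots+(-1)^{n-1}u_n$ the alternating sum of the over-cable. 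A short computation (checking that reversing the order of the $n$ under-passages fixes $\ell$ for odd $n$ but negates it for even $n$) shows that for odd $n$ the sign is always $+$, whereas for even $n$ it equals the sign of the crossing. This sign bookkeeping is what ultimately produces the writhe rather than the crossing number.

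Granting the local lemma, I would extract two invariants of the traversal. The \emph{difference vector} $d(v)=(v_1-v_2,\dots,v_{n-1}-v_n)$ is multiplied by $(-1)^n$ at each under-passage and is unchanged at each over-passage, while the alternating sum transforms by $\ell\mapsto(-1)^n\ell(v)+c$. Traversing $D$ once, the cable is the under-cable exactly once at every crossing, so $d$ is multiplied overall by $((-1)^n)^{\mathrm{cr}(D)}=(-1)^{nw}$, using $\mathrm{cr}(D)\equiv w\pmod 2$. From here the three cases fall out. For even $n$ the factor is $1$, so $d$ always closes up; moreover $\ell(\mathbf 1)=0$ forces $\ell$ to be globally constant and equal to $\ell(v)$, so each under-passage adds $\pm 2\ell(v)\mathbf 1$ with sign the crossing sign, and the total drift in the $\mathbf 1$-direction is $2w\,\ell(v)\,\mathbf 1$; closing up is therefore equivalent to $w\,\ell(v)=0$. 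For odd $n$ with $w$ odd the factor is $-1$, forcing $d=0$, i.e. $v=a\mathbf 1$, and the constant colourings realize exactly these; for odd $n$ with $w$ even the factor is $1$ and, after also checking that the scalar system governing $\ell$ imposes no further obstruction, every $v\in\mathbb{Z}^n$ is attained.

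Injectivity of $r$ is the easy part: because a knot diagram is connected, the base colour vector $v$ determines the colours of all other cable segments through the deterministic transformations above, so two colorings agreeing on the base $n$ arcs coincide everywhere. For the image I would argue both inclusions: the invariant computation above shows the closing-up conditions are \emph{necessary}, and conversely, given $v$ in the stated set, I would propagate colours around $D$ and verify that the single monodromy equation—precisely the stated condition—is the only consistency requirement, so $v$ is realized. The main obstacle is twofold: getting the local sign analysis exactly right so that the writhe (not the crossing number) appears for even $n$, and handling the mild coupling in the constant part for odd $n$, where the constant $c$ at an under-passage depends on the over-cable elsewhere in the diagram. I would resolve the latter by first solving the self-contained scalar subsystem for the $\ell$-values of all segments (which decouples from the difference vector) and then reconstructing the full colour vectors; confirming that this leaves no hidden obstruction beyond the single closing equation is the delicate point.
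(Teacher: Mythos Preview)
Your approach is correct and takes a genuinely different route from the paper. The paper recasts a $\mathbb{Z}$-coloring of $D^{(n)}$ as a coloring of $D$ by a rack $\mathbb{Z}^n_R$ (with $(x_i)_i *_R (y_i)_i$ encoding passage of all $n$ under-strands beneath all $n$ over-strands), and then proves a structural lemma: every maximal connected subrack of $\mathbb{Z}^n_R$ is a cyclic rack. In a cyclic rack the monodromy around the knot is automatically the $w$-th power of the kink map $\tau(\mathbf{a})=\mathbf{a}*_R\mathbf{a}$, so the closing condition is the single equation $\tau^w(\mathbf{a})=\mathbf{a}$, which yields all three cases by one explicit computation of $\tau$. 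You instead compute the monodromy directly from the affine under-passage map $v\mapsto(-1)^n v+c\mathbf{1}$; this is more elementary but costs you the sign bookkeeping you flag (so that the writhe rather than the crossing number appears for even $n$) and the coupling for odd $n$. For the latter, the clean way to complete your sketch is to note that your scalar subsystem for the $\ell$-values is literally a $\mathbb{Z}$-coloring of $D$ itself (since $\ell(-v+2\ell(u)\mathbf{1})=-\ell(v)+2\ell(u)$ when $n$ is odd), hence constant because $D$ is a knot; the monodromy is then the involution $v\mapsto -v+2\ell_0\mathbf{1}$ iterated $\mathrm{cr}(D)\equiv w\pmod 2$ times, and the remaining cases drop out. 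The paper's rack framework generalizes readily to other target quandles and packages the sign issues into the kink map, while your approach is self-contained and avoids the quandle/rack background entirely.
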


\begin{remark}\label{parallel-rem}
Theorem \ref{parallel-thm} states that the colors $a_1,\dots,a_n$ of the $n$-paralleled arcs of an arc $\gamma$ determine the whole coloring, especially the colors $a'_1,\dots,a'_n$ of the $n$-parallel of another arc $\gamma'$. 
As we see in the proof, we can calculate $a'_1,\dots,a'_n$ from a cyclic-rack coloring of $D$. 
For example, if $n$ is even and $w \neq 0$, we always have $a'_i = a_i$ for any $i$. 
If $n$ is odd and $w$ is even, we track the string of $D$ from $\gamma$ to $\gamma'$ and count the number of times of passing under arcs; if it is even, then $a'_i = a_i$, and otherwise, $a'_i = -a_i + 2(a_1 - a_2 + \cdots + a_n)$.
\end{remark}

We include a proof of Theorem~\ref{parallel-thm} in Appendix, for it is rather independent from the other contents of the paper. 

\subsection{Determining $\mathbb{Z}$-colorings}
Let $B(a,b)$ denote the braid (diagram) illustrated in Figure \ref{braid-fig}. 
We can regard the torus link $T(a,b)$ as the closure of $B(a,b)$. 
Then, an assignment of colors to the $b$ left ends of $B(a,b)$ determines a $\mathbb{Z}$-coloring of $B(a,b)$, and if the resulting colors of the right ends coincide with the left colors, it gives a $\mathbb{Z}$-coloring of the standard diagram of $T(a,b)$.
\begin{figure}[htb]
\centering
\includegraphics[width=7cm]{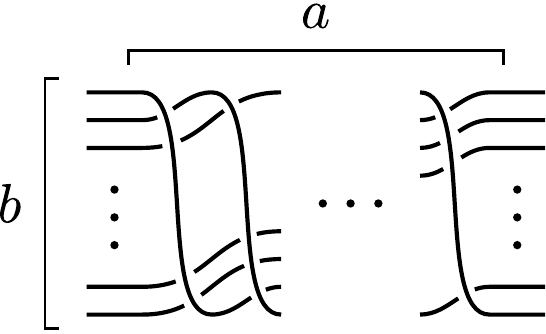}
\caption{$B(a,b)$}\label{braid-fig}
\end{figure}
\par Let $p,q$, and $r$ be nonzero integers such that $|p| \geq q \geq 1$ and $r \geq 2$, and assume that $p$ and $q$ are mutually prime. 
We divide the $qr$ arcs of the left end of $B(pr,qr)$ into $q$ subfamilies ${\bm x}_1, \dots, {\bm x}_q$ as depicted in Figure \ref{torus9-1}~(left). 
Let $A$ be the set of the $qr$-tuples of integers which give a $\mathbb{Z}$-coloring of the standard diagram $D$ of $T(pr,qr)$, i.e.,
$$A = \left\{({\bm a}_1, \dots, {\bm a}_q) \in (\mathbb{Z}^r)^q \middle|
\begin{tabular}{l} the assignment of ${\bm a}_1, \dots, {\bm a}_q \in \mathbb{Z}^r$ to ${\bm x}_1, \dots, {\bm x}_q$\\
defines a $\mathbb{Z}$-coloring of $D$\end{tabular}\right\}.$$
\par The following proposition describes the coloring of torus links:

\begin{prop}\label{col-prop}
We have
$$A = \left\{ \begin{tabular}{ll} $\{({\bm a}, \dots, {\bm a}) \mid {\bm a} \in \mathbb{Z}^r, \Delta({\bm a}) = 0\}$ & if $r$ is even,\\
$\{({\bm a}, \dots, {\bm a}) \mid {\bm a} \in \mathbb{Z}^r\}$ & if $r$ is odd and $p$ is even,\\
$\{({\bm a}, \tau({\bm a}), {\bm a}, \dots, \tau({\bm a})) \mid {\bm a} \in \mathbb{Z}^r\}$ & if $r$ is odd and $q$ is even,\\
\end{tabular} \right.$$
where $\Delta({\bm a}) = a_1- a_2 + \cdots +(-1)^r a_r \in \mathbb{Z}$ and $\tau({\bm a}) = (-a_i + 2\Delta({\bm a}))_i \in \mathbb{Z}^r$ for ${\bm a} = (a_1,\dots,a_r) \in \mathbb{Z}^r.$
\end{prop}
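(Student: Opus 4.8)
The plan is to reduce the classification of $\mathbb{Z}$-colorings of the standard diagram $D$ of $T(pr,qr)$ to an application of Theorem~\ref{parallel-thm}, by exhibiting $D$ as the $r$-parallel of a suitable knot diagram. Concretely, since $\gcd(p,q)=1$, the torus link $T(pr,qr)$ is the $r$-cable of the torus knot $T(p,q)$, and its standard diagram $D$ is exactly the $r$-parallel $D_0^{(r)}$ of the standard diagram $D_0$ of $T(p,q)$, where the $q$ subfamilies $\bm{x}_1,\dots,\bm{x}_q$ correspond to the $r$-paralleled copies of the $q$ strands of $D_0$. The first step is therefore to compute the writhe $w$ of $D_0$: for the standard diagram of $T(p,q)$ one has $w = p(q-1)$ (or $q(p-1)$, depending on orientation conventions), and the essential point is to determine the parity of $w$ in each of the three cases. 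When $r$ is odd and we are in the $\mathbb{Z}$-colorable situation, exactly one of $p,q$ is even; tracking which factor is even controls the parity of $w$ and thereby selects the correct branch of Theorem~\ref{parallel-thm}.

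Once $D = D_0^{(r)}$ is established, I would apply Theorem~\ref{parallel-thm} with $n=r$ to the fixed arc whose $r$ parallel copies form $\bm{x}_1$, so that $r(\mathcal{C}) = \bm{a}_1 = \bm{a}$ and the three branches of ${\rm Im}\,r$ translate directly into constraints on $\bm{a}$. For $r$ even, the image condition $w(a_1 - a_2 + \cdots - a_r) = 0$ becomes $w\,\Delta(\bm{a}) = 0$; since $w = p(q-1) \neq 0$ here, this forces $\Delta(\bm{a}) = 0$, matching the first case. For $r$ odd and $p$ even, $w$ is even, so the image is all of $\mathbb{Z}^r$ and $\bm{a}$ is unconstrained, matching the second case. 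For $r$ odd and $q$ even, $w$ is odd, so the image is the diagonal $\{(a,\dots,a)\}$—but here the diagonal condition is a statement about the colors of a \emph{single} $r$-paralleled arc of $D_0$, not a claim that all of $\bm{a}_1,\dots,\bm{a}_q$ coincide.

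The genuinely substantive step, and the place where Remark~\ref{parallel-rem} does the real work, is recovering the colors $\bm{a}_2,\dots,\bm{a}_q$ of the remaining subfamilies from $\bm{a}_1 = \bm{a}$. By the injectivity half of Theorem~\ref{parallel-thm}, the coloring is determined by $\bm{a}$, and by the remark one computes the colors of the $r$-parallel of any other arc $\gamma'$ of $D_0$ by tracking the strand of $D_0$ from the base arc to $\gamma'$ and counting how many times it passes \emph{under} a crossing. I would verify that, in passing from the strand carrying $\bm{x}_i$ to the strand carrying $\bm{x}_{i+1}$ in the standard diagram $D_0$ of $T(p,q)$, this under-crossing count has the parity that alternates the transformation: in the $r$-even and $p$-even cases the rule $a'_i = a_i$ applies uniformly (recall that for $r$ even with $w\neq 0$ one always has $a'_i=a_i$, and for $r$ odd, $p$ even one checks the count is even), yielding $(\bm{a},\dots,\bm{a})$; whereas in the $r$-odd, $q$-even case the count is odd at each step, so the remark's formula $a'_i = -a_i + 2\Delta(\bm{a})$ gives precisely $\tau(\bm{a})$, and the alternation produces $(\bm{a},\tau(\bm{a}),\bm{a},\dots,\tau(\bm{a}))$. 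The main obstacle I anticipate is this combinatorial bookkeeping of the under-crossing parities along the standard torus-knot diagram—making sure the strand-permutation structure of $T(p,q)$ is read off correctly so that the alternating pattern $\bm{a},\tau(\bm{a}),\bm{a},\dots$ emerges with the right entries in the right places, and confirming that $\tau$ being an involution is consistent with the closure condition that the right-end colors of $B(pr,qr)$ match the left-end colors.
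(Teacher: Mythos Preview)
Your overall strategy --- realize $D$ as the $r$-parallel of a diagram $D_0$ of $T(p,q)$, apply Theorem~\ref{parallel-thm} to pin down the admissible $\bm a = \bm a_1$, then use Remark~\ref{parallel-rem} to read off $\bm a_2,\dots,\bm a_q$ --- is exactly what the paper does. But there is a genuine gap in your execution: the standard minimal diagram of $T(p,q)$ is \emph{not} the right $D_0$. Its $r$-parallel is not the standard diagram $D$ of $T(pr,qr)$; it is not even the right link (for $q=1$ the $r$-parallel of a $0$-crossing unknot is an $r$-component unlink, not $T(pr,r)$). The paper instead produces a specific tangle $B_0$ whose closure $D_0$ still represents $T(p,q)$ but whose $r$-parallel is \emph{isotopic} to $B(pr,qr)$, with the isotopy fixing the arcs $x_{i,j}$. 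Since $D^{(r)}$ represents the $(rw,r)$-cable and $T(pr,qr)$ is the torus-framed $r$-cable of $T(p,q)$, this forces the writhe of $D_0$ to be $w = pq$, not $p(q-1)$.

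This writhe error propagates into a wrong case split. With the correct $w=pq$, in \emph{both} $r$-odd cases ($p$ even or $q$ even) one has $w$ even, so Theorem~\ref{parallel-thm} gives ${\rm Im}\,r = \mathbb{Z}^r$ and $\bm a$ is unconstrained; the dichotomy between $(\bm a,\dots,\bm a)$ and $(\bm a,\tau(\bm a),\bm a,\dots)$ comes \emph{entirely} from the under-crossing parity of Remark~\ref{parallel-rem}. The paper packages this parity computation as a $C_2$-coloring of $D_0$: the left strands get colors $0,0,\dots,0$ when $p$ is even and $0,1,0,1,\dots$ when $q$ is even. Your version, with $w=p(q-1)$, makes $w$ \emph{odd} when $q$ is even, landing in the third branch of Theorem~\ref{parallel-thm}. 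You then try to reinterpret the conclusion ``${\rm Im}\,r = \{(a,\dots,a)\}$'' as merely constraining one family of arcs --- but that branch really does say $\bm a_1 = (a,\dots,a)$ for a single integer $a$, i.e.\ the coloring is trivial, which flatly contradicts the proposition. So the third case cannot be reached; fixing $D_0$ (hence $w$) resolves this.
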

\begin{proof}
Let $B_0$ be the tangle diagram depicted in Figure~\ref{b0-fig} and $B_0^{(r)}$ the $r$-parallel of $B_0$. 
We denote the closures of $B_0$ and $B_0^{(r)}$ by $D_0$ and $D_0^{(r)}$, respectively. 
Since $D_0$ represents the knot $T(p,q)$, we can apply Theorem \ref{parallel-thm} to $D_0$ and its $r$-parallel $D_0^{(r)}$ to determine the colorings of $D_0^{(r)}$. 
Furthermore, we should remark that $B_0^{(r)}$ and $B(pr,qr)$ are isotopic; we find the colorings of $D$ from those of $D_0^{(r)}$, using an isotopy which takes $D_0^{(r)}$ to $D$ and fixes the arcs $x_{i,j}$ ($1 \leq i \leq q, 1 \leq j \leq r$) and their colors.

\begin{figure}[h]
\centering
\includegraphics[width=7cm]{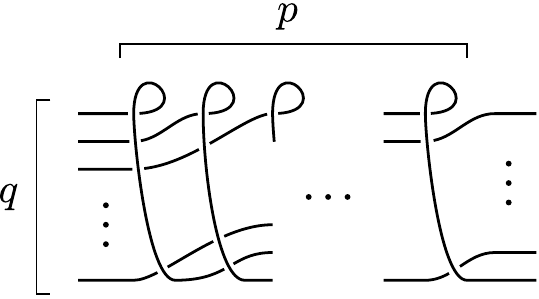}
\caption{$B_0$}\label{b0-fig}
\end{figure}

\par By Theorem \ref{parallel-thm} (and Remark \ref{parallel-rem}), a coloring of $D_0^{(r)}$ is determined by the color ${\bm a} \in \mathbb{Z}^r$ of $r$ arcs ${\bm x}_1$, and the whole coloring is found from a cyclic-rack coloring of $D_0$. 
If $r$ is even, the color ${\bm a}$ has to satisfy the condition $\Delta({\bm a}) = 0$ and then the color of ${\bm x}_i$ is equal to ${\bm a}$ for any $i$. 
In the other cases, we can choose any ${\bm a} \in \mathbb{Z}^r$, and we consider the cyclic rack $C_2$ of order $2$ to examine the whole coloring. A brief calculation finds a $C_2$-coloring of $D_0$, which colors the left $q$ arcs
$$\left\{ \begin{array}{ll} 0,0,0,0,\dots,0 & \text{if $p$ is even and $q$ is odd,}\\
0,1,0,1,\dots,0 & \text{if $p$ is odd and $q$ is even.} \end{array} \right.$$
In terms of Remark \ref{parallel-rem}, a color $0 \in C_2$ corresponds to ``even'' and $1 \in C_1$ to ``odd''; as in the remark, we associate ${\bm a}$ to the $r$-paralleled arcs of an arc if it has a color $0$, and associate $\tau({\bm a})$ otherwise. This completes the proposition.
\end{proof}


\subsection{Coloring with four colors}
Let $p, q,$ and $r$ be non-zero integers such that $p$ and $q$ are relatively prime, $|p|\geq q\geq 1$, and $r\geq 2$. 
By Theorem \ref{main-thm}, the standard diagram $D$ of $T(pr,qr)$ admits a $\mathbb{Z}$-coloring $\gamma$ with four colors if and only if $r$ is even. 
In this case, it is sufficient to consider the case where ${\rm Im}\,\gamma = \{0,1,2,3\}$ by \cite[Theorem 3.2]{IchiharaMatsudo2}. Let $A^{(4)}$ be the set of the $qr$-tuples which give such $\mathbb{Z}$-colorings:
$$A^{(4)} = \left\{({\bm a}_1, \dots, {\bm a}_q) \in (\mathbb{Z}^r)^q \middle| 
\begin{tabular}{l} the assignment of ${\bm a}_1, \dots, {\bm a}_q \in \mathbb{Z}^r$ to ${\bm x}_1, \dots, {\bm x}_q$\\
defines a $\mathbb{Z}$-coloring of $D$ with the four colors $\{0,1,2,3\}$\end{tabular}\right\}.$$
Here, we regard $D$ as the closure of $B(pr,qr)$ and denote the subfamilies of the left $qr$ arcs by ${\bm x}_1,\dots,{\bm x}_q$ as shown in Figure \ref{torus9-1} (left).

\begin{prop}\label{4col-prop}
We have
$$A^{(4)} = \left\{({\bm a},\dots,{\bm a}) \,\middle|\, {\bm a} \in A^{(4)}_{01} \cup A^{(4)}_{12} \cup A^{(4)}_{23} \right\}\backslash \{(1,\dots,1),(2,\dots,2)\},$$
where
\begin{align*}
A^{(4)}_{01} &= \left\{(a_1, \dots, a_r) \in \{0,1\}^r \,\middle|\, a_1 = a_r = 1, a_{2i} = a_{2i+1} \;(i=1,\dots,r/2-1)\right\},\\
A^{(4)}_{12} &= \left\{(a_1, \dots, a_r) \in \{1,2\}^r \,\middle|\, a_{2i-1} = a_{2i} \;(i=1,\dots,r/2)\right\},\\
A^{(4)}_{23} &= \left\{(a_1, \dots, a_r) \in \{2,3\}^r \,\middle|\, a_1 = a_r = 2, a_{2i} = a_{2i+1} \;(i=1,\dots,r/2-1)\right\}.
\end{align*}
\end{prop}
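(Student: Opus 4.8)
The plan is to reduce the statement to a question about a single vector ${\bm a}\in\mathbb{Z}^r$ and then to analyze how colors propagate through the braid. By definition $A^{(4)}$ records exactly the colorings of $D$ whose image is $\{0,1,2,3\}$, and by Proposition~\ref{col-prop} (the case $r$ even) every $\mathbb{Z}$-coloring of $D$ has the form $({\bm a},\dots,{\bm a})$ with $\Delta({\bm a})=0$ and is completely determined by ${\bm a}$. Hence it suffices to single out which ${\bm a}$ with $\Delta({\bm a})=0$ give a coloring whose image is precisely $\{0,1,2,3\}$, and to check that the set of such $({\bm a},\dots,{\bm a})$ equals the asserted union minus the two trivial vectors. (That fixing the image to be $\{0,1,2,3\}$, rather than an arbitrary four colors, is harmless is the content of \cite[Theorem 3.2]{IchiharaMatsudo2}.)

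Next I would make explicit the set of colors occurring on $D$. Writing $D$ as the closure of $\beta^{pr}$, where $\beta=\sigma_1\cdots\sigma_{qr-1}$ is one full cyclic twist of the $qr$ strands, the coloring is the forward $\beta$-orbit of the initial vector $({\bm a},\dots,{\bm a})$: a single application of $\beta$ carries the leading color over all the remaining strands, replacing each color $c$ by $2(\text{leading})-c$, so the color of \emph{every} arc of $D$ is an entry of some iterate $\beta^{t}({\bm a},\dots,{\bm a})$. Thus ``${\rm Im}\,\gamma=\{0,1,2,3\}$'' is equivalent to the whole orbit staying inside $\{0,1,2,3\}^{qr}$ and attaining all four values. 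For the sufficiency direction I would take each ${\bm a}$ in $A^{(4)}_{01}$, $A^{(4)}_{12}$, or $A^{(4)}_{23}$ and exhibit the explicit periodic orbit (generalizing the pattern starting from $(1,0,\dots,0,1)$ used in the proof of Theorem~\ref{thm_r:even}); the pairing conditions in the definitions of these three sets are exactly what forces the pattern to close up inside the band $\{0,1,2,3\}$.

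The heart of the argument is the converse, and here I would use Lemma~\ref{lem1}: since $0=\min$ and $3=\max$, an over arc colored $0$ or $3$ forces triviality, so every leading color in the orbit lies in $\{1,2\}$. From this I would first rule out entries of ${\bm a}$ spanning three distinct values, forcing ${\bm a}\in\{0,1\}^r\cup\{1,2\}^r\cup\{2,3\}^r$, the point being that reflecting a value lying outside the current two-color band about a leading color in $\{1,2\}$ immediately yields $-1$ or $4$. I would then pin down the pairing conditions by tracking, along adjacent parallel strands, where the ``jumps'' (consecutive differences $\pm1$) between band values sit: since $\beta$ cyclically shifts positions while the reflections preserve the sizes of the gaps between consecutive strands, a jump at an odd--even pair $(a_{2i-1},a_{2i})$, or a failure of the boundary condition $a_1=a_r$ in the $\{0,1\}$ and $\{2,3\}$ cases, must drive some entry to $-1$ or $4$ after a bounded number of iterations, as in the sample escapes $(0,1,1,0)\mapsto(-1,-1,0,0)$ and $(2,1,1,2)\mapsto(3,3,2,2)\mapsto(3,4,4,3)$. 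Packaging this escape phenomenon into one clean invariant or monovariant, rather than an ad hoc case analysis on the locations of the jumps, is the step I expect to be the main obstacle.

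Finally I would dispose of the surjectivity requirement: among all vectors in the three sets, only $(1,\dots,1)$ and $(2,\dots,2)$ produce a monochromatic orbit (image of size one), whereas every other vector already realizes all of $0,1,2,3$; deleting these two vectors yields exactly the claimed description of $A^{(4)}$.
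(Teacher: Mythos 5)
Your overall skeleton matches the paper's proof: reduce via Proposition~\ref{col-prop} to a single vector ${\bm a}$ with $\Delta({\bm a})=0$, use Lemma~\ref{lem1} to force every over-arc color into $\{1,2\}$, and handle sufficiency by checking that one twist preserves the candidate set (non-constancy together with $mincol_{\mathbb{Z}}\geq 4$ then forces the image to be all of $\{0,1,2,3\}$, which also disposes of your last paragraph). The gap is exactly where you predict it, in the necessity direction, and it is genuine. First, your claim that a value lying outside the current two-color band ``immediately yields $-1$ or $4$'' under one reflection is false: reflecting $0$ about the leading color $1$ gives $2$, and reflecting $3$ about $2$ gives $1$, so for instance a vector with value set $\{0,1,2\}$ and leading color $1$ survives a twist, and $(1,2,2,1)$ needs two twists before it escapes. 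Moreover, the dangerous configurations are \emph{two}-element value sets such as $\{0,2\}$, $\{1,3\}$, $\{0,3\}$, not ``three distinct values.'' The paper closes this with a local argument you are missing: if $a_i$ and $a_j$ differ by $2$, the strands $x_{1,i}$ and $x_{1,j}$ pass under the \emph{same} arcs, so their difference is preserved until the moment $x'_j$ passes under the over arc $x'_i$; at that single crossing the three colors are $a'_i-2$, $a'_i$, $a'_i+2$ with $a'_i\in\{1,2\}$, which cannot all lie in $\{0,1,2,3\}$. No orbit-escape monovariant is needed.

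Second, the pairing conditions. Your ``jump-tracking'' idea is not carried out (you flag it yourself as the main obstacle), and the paper settles it with two short observations: (i) in the $\{1,2\}$ band, the strand $x_{1,2}$ itself becomes an over arc with color $2a_1-a_2$, which lies in $\{0,3\}$ unless $a_1=a_2$; once $a_1=a_2$, \emph{two} twists act as the cyclic shift $(a_1,\dots,a_r)\mapsto(a_3,\dots,a_r,a_1,a_2)$, so the same argument propagates to $a_{2i-1}=a_{2i}$ for all $i$; (ii) in the $\{0,1\}$ and $\{2,3\}$ bands, $a_1$ and $a_r$ are over-arc colors, hence equal to $1$ (resp.\ $2$), and a single twist sends ${\bm a}$ to $(-a_2+2a_1,\dots,-a_r+2a_1,a_1)$, which lies in the $\{1,2\}$ band with indices shifted by one, reducing (ii) to (i) and yielding $a_{2i}=a_{2i+1}$. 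Without these two mechanisms the characterization of $A^{(4)}_{01}$, $A^{(4)}_{12}$, $A^{(4)}_{23}$ is not established, so as written the proposal does not prove the proposition.
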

\begin{proof}
By Proposition \ref{col-prop}, a member of $A^{(4)}$ is expressed as $({\bm a}, \dots, {\bm a})$, where ${\bm a} = (a_1,\dots, a_r)$ is an $r$-tuple of integers with $\Delta({\bm a}) = a_1 - a_2 + \dots - a_r$ equal to $0$. 
We have to show that ${\bm a} \in A^{(4)}_{01} \cup A^{(4)}_{12} \cup A^{(4)}_{23}$.
\par Let $S$ be the set $\{a_1,\dots,a_r\}$. We first assert that $S = \{0,1\},\{1,2\},$ or $\{2,3\}$. 
To show $S \not\supset \{0,2\}$, we suppose that $\{a_i,a_j\} = \{0,2\} \;(i < j)$. 
The arc $x_{1,i}$ goes under the $i-1$ arcs to become an over arc $x'_i$ with color $a'_i$, whereas $x_{1,j}$ goes under the same arcs to become $x'_j$ with color $a'_j$ and then passes under $x'_i$. 
Here, we should remark that $|a'_i - a'_j| = 2$ since $x_{1,i}$ and $x_{1,j}$ goes under the same arcs, but such a crossing is not allowed, as seen in the proof of Theorem \ref{q-even-thm}. 
Thus we have $S \not\supset \{0,2\}$, and in the same way, we can check that $S \not\supset \{1,3\}, \{0,3\}$. 
Therefore $S$ is a set of two consecutive integers, as asserted.
\par In the case where $S = \{1,2\}$, we see that $a_1 = a_2$; otherwise, the color $-a_2 + 2a_1$ of the over arc next to $x_{1,1}$ would not be $1$ or $2$. 
After two twists, the colors of the arcs shift cyclically to be $(a_3, a_4, \dots, a_2)$ since passing under two strings with a same color does not change the color. 
Then, the argument above shows that $a_3 = a_4$. 
Repeating this, we find $a_{2i - 1} = a_{2i}$ for $i = 1,\dots,r/2$, i.e., ${\bm a} \in A^{(4)}_{12}$.
\par In the case where $S = \{0,1\}$ (resp. $\{2,3\}$), the colors $a_1$ and $a_r$ of over arcs have to be $1$ (resp. $2$). 
After $x_{1,1}$ comes to the bottom, the colors of the arcs are $(-a_2 + 2a_1, -a_3 + 2a_1,\dots, -a_r + 2a_1, a_1)$, and this is also a member of $A^{(4)}$. 
Since the set $\{a_1, -a_2 + 2a_1,,\dots, -a_r + 2a_1\}$ is equal to $\{1,2\}$, we have $a_{2i} = a_{2i+1} \;(i=1,\dots,r/2-1)$ as shown above. 
This means that ${\bm a} \in A^{(4)}_{01}$ (resp. $A^{(4)}_{23}$).
\par Let $A'$ be the set $\left\{({\bm a},\dots,{\bm a}) \,\middle|\, {\bm a} \in A^{(4)}_{01} \cup A^{(4)}_{12} \cup A^{(4)}_{23} \right\}\backslash \{(1,\dots,1),(2,\dots,2)\}$. 
We have proven that $A^{(4)} \subset A'$ as above. 
A brief calculation shows that $\Delta({\bm a}) = 0$ for ${\bm a} \in A^{(4)}_{01} \cup A^{(4)}_{12} \cup A^{(4)}_{23}$ and hence $({\bm a},\dots,{\bm a}) \in A'$ gives a $\mathbb{Z}$-coloring of $D$. 
To see that every element of $A'$ actually defines a four-color coloring, it is sufficient to show that for any $(a_1,\dots,a_{qr}) \in A'$, $(-a_2 + 2a_1, \dots, -a_{qr} + 2a_1, a_1)$ is also a member of $A'$; this means that ``one twist preserves $A'$'', and then implies that there are only four colors in the resultant $\mathbb{Z}$-coloring. 
This is verified by the definitions of $A^{(4)}_{01}, A^{(4)}_{12}$, and $A^{(4)}_{23}$. 
Thus we have $A' \subset A^{(4)}$ as required.
\end{proof}

\appendix

\section{$\mathbb{Z}$-coloring of $n$-parallels of knots}\label{sec:n-parallel}

In this appendix, we give a proof of Theorem~\ref{parallel-thm}. 
In this proof, we use ideas of racks and quandles. For definitions of racks, quandles, and rack (quandle) colorings, see \cite{ElhamdadiNelson} for example. For a rack $R$, we denote the set of $R$-colorings of an oriented knot diagram $D$ by ${\rm Col}_R(D)$. For example, the set $\mathbb{Z}$ equipped with the binary operation $*$ defined by $a * b = 2b - a$ is a rack (it is in fact a quandle), and then ${\rm Col}_{\mathbb{Z}}(D)$ is the set of $\mathbb{Z}$-colorings as in Section \ref{sec:description}.

We denote the automorphism group of a rack $R=(R,*)$ by ${\rm Aut}(R,*)$. For each $a \in R$, the map $\bullet * a: R \ni x \mapsto x * a \in R$ is by definition is an automorphism of $R$, and then the subgroup of ${\rm Aut}(R,*)$ generated by $\bullet * a\;(a\in R)$ is called the \textit{inner automorphism group} and denoted by ${\rm Inn}(R,*)$. We say that $R$ is (\textit{algebraically}) \textit{connected} if the action of ${\rm Inn}(R,*)$ on $R$ is transitive.

Furthermore, we should recall the quandles and kink maps associated to racks. The \textit{associated quandle} $R_Q$ of a rack $R = (R, *)$ is the pair $(R,*_Q)$, where the binary operation $*_Q$ is defined by $x *_Q y = (x \mathbin{\bar{*}} x) * y$ $(x,y \in R)$, and the \textit{associated kink map} $\tau$ is defined by $\tau(x) = x * x$ $(x \in R)$. A brief calculation shows that $R_Q$ is a quandle and $\tau$ is a kink map of $R$ (see, e.g., \cite{AndruskiewitschGrana}), i.e., $\tau$ is an automorphism of $R$ and for any $x,y \in R$ we have $x * \tau(y) = x * y$; this implies that $\tau$ is also a kink map of $R_Q$. We remark that the quandle $R_Q$ is equal to $R$ as a set, and an automorphism of $R$ is also an automorphism of $R_Q$; we may regard ${\rm Aut}(R,*)$ and ${\rm Inn}(R,*)$ as subgroups of ${\rm Aut}(R_Q)$.

To show the theorem, we introduce a rack $\mathbb{Z}^n_R$ as follows (this is due to \cite{Naruse}): for ${\bm x} = (x_1, \dots, x_n), {\bm y} = (y_1,\dots,y_n) \in \mathbb{Z}^n,$ we put ${\bm x} *_R {\bm y} = {\bm z} = (z_1,\dots,z_n)$, where
\[ z_i = (((x_i * y_1) * y_2) \cdots ) * y_n. \]
By a brief calculation, we can check that $\mathbb{Z}^n_R = (\mathbb{Z}^n, *_R)$ is a rack. 

\par In the proof of Theorem \ref{parallel-thm} below, we examine the $\mathbb{Z}^n_R$-colorings of the knot diagram $D$. Since the fundamental rack of a framed knot is connected, the image of a coloring of $D$ by a rack $R$ is contained in a connected subrack of $R$; if $R$ is decomposed into the maximal connected subracks $R_{\lambda}\;(\lambda \in \Lambda)$ (for the existence and uniqueness of the decomposition, see, e.g., \cite{AndruskiewitschGrana}), we have ${\rm Col}_R(D) = \bigsqcup_{\lambda \in \Lambda} {\rm Col}_{R_{\lambda}}(D)$. The following lemma describes the subracks $R_{\lambda}$ when $R=\mathbb{Z}^n_R$.

\begin{lem}\label{max-lem}
Each maximal connected subrack of $\mathbb{Z}^n_R$ is a cyclic rack.
\end{lem}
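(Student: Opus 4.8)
The plan is to make the rack operation completely explicit and then exploit the rigidity that results. A direct computation with $a * b = 2b - a$ shows that, for $\bm x = (x_1,\dots,x_n)$ and $\bm y = (y_1,\dots,y_n)$,
\[ \bm x *_R \bm y = (-1)^n \bm x + 2P(\bm y)\,\bm 1, \]
where $\bm 1 = (1,\dots,1)$ and $P(\bm y) = \sum_{k=1}^n (-1)^{n-k} y_k$. The key structural observation is that the right translation $\bullet *_R \bm y$ depends on $\bm y$ \emph{only} through the single integer $P(\bm y)$: it equals the affine map $g_{P(\bm y)}$, where $g_t(\bm x) = (-1)^n \bm x + 2t\,\bm 1$. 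Thus every inner automorphism of $\mathbb{Z}^n_R$ is a composition of the maps $g_t$, which is a translation along $\bm 1$ when $n$ is even and an affine reflection when $n$ is odd; in either case the whole action is confined to the diagonal direction $\bm 1$.

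Next I would show that $P$ is constant on every connected subrack, by recognizing $P$ itself as a rack homomorphism onto a one-dimensional model. Using $P(\bm 1)=0$ ($n$ even) or $P(\bm 1)=1$ ($n$ odd), one computes $P(\bm x *_R \bm y) = P(\bm x)$ when $n$ is even and $P(\bm x *_R \bm y) = -P(\bm x) + 2P(\bm y) = P(\bm x) * P(\bm y)$ when $n$ is odd. Hence $P$ is a homomorphism from $\mathbb{Z}^n_R$ to the trivial rack on $\mathbb{Z}$ (for $n$ even) or to the dihedral quandle $\mathbb{Z}^1_R = (\mathbb{Z},*)$ (for $n$ odd). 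Since a rack homomorphism is equivariant for the inner actions, it sends a connected subrack to a connected subrack. For $n$ even this forces $P$ to be constant immediately; for $n$ odd I would first observe that the only connected subracks of $(\mathbb{Z},*)$ are singletons (if such a subrack $S$ had two distinct elements, the translations in its inner automorphism group, generated by the differences $2(a-b)$ with $a,b\in S$, would already move the subrack off the reflection-invariant coset it must lie in, so the inner action could not be transitive), and conclude that $P(S)$ is a single point for every connected subrack $S$ of $\mathbb{Z}^n_R$.

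Once $P$ is known to be constant, say $P \equiv p$, on a connected subrack $S$, the proof finishes quickly: for every $\bm a \in S$ the right translation $\bullet *_R \bm a$ equals the single map $g_p$, so the operation on $S$ is independent of its second argument. A connected rack whose right translation is independent of the second argument is, by definition, cyclic: the common permutation $g_p|_S$ must then act on $S$ with a single orbit. Concretely, $S$ is the orbit $\{g_p^m(\bm x_0) \mid m \in \mathbb{Z}\}$ of any one of its elements, which is an infinite cyclic rack when $n$ is even and $p \neq 0$, the two-element cyclic rack $\{\bm x_0, \tau(\bm x_0)\}$ when $n$ is odd and $\bm x_0$ is not fixed by the kink map $\tau$, and a one-element rack otherwise. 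In particular every maximal connected subrack is cyclic.

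I expect the main obstacle to be the $n$ odd half of the constancy of $P$, namely ruling out larger connected subracks on which $P$ varies. Reducing this to the elementary fact that the dihedral quandle $(\mathbb{Z},*)$ admits no connected subrack of size greater than one, via the homomorphism $P$, is what makes the argument go through cleanly; the remaining ingredients — the explicit operation formula and the identification of a connected, second-argument-independent rack as a cyclic rack — are routine.
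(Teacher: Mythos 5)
Your proof is correct, and it takes a genuinely different route from the paper's. The paper passes through the associated quandle $\mathbb{Z}^n_Q$: it computes $*_Q$ explicitly, shows every maximal connected subquandle is a singleton (reducing, as you do, to the fact that the dihedral quandle on $\mathbb{Z}$ and the trivial quandle have only one-point connected subquandles), and then invokes the general structural Lemma~\ref{tau-lem}, which expresses a maximal connected subrack of any rack as the union of kink-map iterates of the corresponding subquandle; cyclicity then falls out because that subquandle is a point. You bypass the associated quandle and Lemma~\ref{tau-lem} entirely: your key observation is that the right translation $\bullet *_R \bm{y}$ depends on $\bm{y}$ only through the alternating sum $P(\bm{y})$, that $P$ is a rack homomorphism onto a one-dimensional model (trivial for $n$ even, dihedral for $n$ odd) and hence constant on connected subracks, so that each connected subrack is a constant-action rack, and a connected constant-action rack is cyclic essentially by definition. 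What the paper's approach buys is generality --- Lemma~\ref{tau-lem} is a statement about arbitrary racks and is presented as independently interesting --- while your approach is more self-contained and makes the cyclic structure (the single permutation $g_p$ generating each component) completely explicit, including the identification of $g_p$ with the kink map on each component. Your justification that the dihedral quandle on $\mathbb{Z}$ has no connected subrack with two or more elements is stated somewhat loosely; the clean version is that if $d\mathbb{Z}$ is generated by the differences of elements of $S$, then the $\mathrm{Inn}(S)$-orbit of $a \in S$ is exactly $a + 2d\mathbb{Z}$ while $S$ must meet $a + d + 2d\mathbb{Z}$. The paper asserts the analogous fact without proof, so you are no worse off, but it is worth spelling out.
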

\noindent Recall that a \textit{cyclic rack} $C_k$ ($k \in \mathbb{Z}_{\geq 0}$) is a cyclic group $\mathbb{Z}/k\mathbb{Z}$ with the binary operation $*$ defined by $a * b = a+1$ ($a,b \in \mathbb{Z}/k\mathbb{Z}$). A rack isomorphic to a cyclic rack is also called a cyclic rack.
\begin{proof}[Proof of Lemma \ref{max-lem}]
Let $\mathbb{Z}^n_Q = (\mathbb{Z}^n,*_Q)$ be the associated quandle of $\mathbb{Z}^n_R$ and $\tau$ the associated kink map. 
We first claim that each maximal connected subquandle of $\mathbb{Z}^n_Q$ is the trivial quandle of order $1$. In fact, a brief calculation shows that
\[ (x_i)_i *_Q (y_i)_i = \big( x_i + 2(-x_n + x_{n-1} - \cdots +(-1)^n x_1 + (-1)^{n+1} y_1 + \cdots + y_n) \big)_i,\]
for $(x_i)_i, (y_i)_i \in \mathbb{Z}^n_Q.$ This shows that the orbit of  $(x_i)_i \in \mathbb{Z}^n_Q$ under the action of ${\rm Inn}(\mathbb{Z}^n_Q)$ is included in $\{ (x_i + a)_i \mid a \in \mathbb{Z} \}.$ Furthermore, since
\[ (x_i + a)_i *_Q (x_i + b)_i = \big(x_i + a + (1-(-1)^n)(b-a) \big)_i = \big( x_i + (-1)^n a + (1-(-1)^n)b \big)_i, \]
the quandle $\{ (x_i + a)_i \mid a \in \mathbb{Z} \}$ is isomorphic to $\mathbb{Z}$ or a trivial quandle. In either case, each maximal connected subquandle is the trivial quandle of order $1$. The lemma follows from this claim and Lemma \ref{tau-lem} below.
\end{proof}

\begin{lem}\label{tau-lem}
Let $R = (R,*)$ be a rack, $R_Q = (R, *_Q)$ the associated quandle, and $\tau$ the associated kink map. We denote the maximal connected subrack of $R$ {\rm(}resp. $R_Q${\rm)} containing $x \in R$ {\rm(}resp. $R_Q${\rm)} by $\mathcal{M}_x$ {\rm(}resp. $\mathcal{M}^Q_x${\rm)}. Then we have $\mathcal{M}_x = \bigcup_{m \in \mathbb{Z}} \tau^m(\mathcal{M}^Q_x).$
\end{lem}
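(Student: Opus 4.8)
The plan is to prove the two inclusions $\mathcal{M}_x \supseteq \bigcup_{m} \tau^m(\mathcal{M}^Q_x)$ and $\mathcal{M}_x \subseteq \bigcup_{m} \tau^m(\mathcal{M}^Q_x)$ separately, using throughout the relation between the inner automorphism groups of $R$ and $R_Q$ together with the fact that $\tau$ is a kink map of both. The crucial algebraic input is the identity $x * y = x *_Q \tau(y)$, equivalently $x * y = (x \mathbin{\bar{*}} x) * y$ reinterpreted via $*_Q$, which lets me translate a single $*$-move into a $*_Q$-move preceded or followed by an application of $\tau$. Because $\tau$ is an automorphism of $R$ commuting appropriately with the inner action, I expect the orbit structure of the two racks to differ only by the $\tau$-action.

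First I would establish the inclusion $\supseteq$. Since $\tau$ is an automorphism of $R$, it sends maximal connected subracks to maximal connected subracks; and because $\tau$ is a kink map, one checks $x * \tau(y) = x * y$, so the generators $\bullet * a$ of ${\rm Inn}(R,*)$ agree with the generators $\bullet *_Q a$ of ${\rm Inn}(R_Q)$ up to composing with $\tau$. Concretely, $\bullet * a = \tau \circ (\bullet *_Q a)$ as maps on $R$, since $x *_Q a = (x \mathbin{\bar{*}} x) * a$ and $\tau(x \mathbin{\bar{*}} x) = x$. This identity shows that ${\rm Inn}(R,*)$ and the group generated by ${\rm Inn}(R_Q)$ together with $\tau$ coincide. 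Hence the $\mathrm{Inn}(R,*)$-orbit of $x$ is contained in the union of $\tau$-translates of the $\mathrm{Inn}(R_Q)$-orbit of $x$, giving $\mathcal{M}_x \subseteq \bigcup_m \tau^m(\mathcal{M}^Q_x)$; and conversely each $\tau^m(\mathcal{M}^Q_x)$ is reachable from $x$ by $*$-moves (applying $\tau^m$ and then $*_Q$-moves, all of which are realized as $*$-moves by the same identity), so it lies in $\mathcal{M}_x$. I would phrase this carefully as an equality of the two generated subgroups acting on $R$.

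The core of the argument is therefore the single operator identity $\bullet * a = \tau \circ (\bullet *_Q a)$ on $R$, from which both inclusions follow almost formally. Once this is in hand, $\mathcal{M}_x$ is the orbit of $x$ under the group $G$ generated by $\{\bullet * a\}$, while $\mathcal{M}^Q_x$ is the orbit under $G_Q$ generated by $\{\bullet *_Q a\}$; the identity shows $G = \langle G_Q, \tau\rangle$, and since $\tau$ is central enough relative to this action (because $\tau$ is an automorphism, it conjugates $\bullet *_Q a$ to $\bullet *_Q \tau(a)$, which is again in $G_Q$), the $G$-orbit of $x$ is exactly $\bigcup_{m \in \mathbb{Z}} \tau^m(G_Q \cdot x) = \bigcup_m \tau^m(\mathcal{M}^Q_x)$.

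The main obstacle I anticipate is verifying that $\tau$ normalizes $G_Q$ and correctly bookkeeping how $\tau$ interacts with the inner generators, so that the union $\bigcup_m \tau^m(\mathcal{M}^Q_x)$ is genuinely $G$-invariant rather than merely contained in $\mathcal{M}_x$. This requires the relation $\tau \circ (\bullet *_Q a) \circ \tau^{-1} = \bullet *_Q \tau(a)$, which follows from $\tau$ being an automorphism of $R_Q$; checking it cleanly is the one place where a short computation is unavoidable. Everything else reduces to the kink-map identity and the definitions of the associated quandle and inner automorphism group.
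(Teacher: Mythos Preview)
Your approach is essentially the same as the paper's: both rest on the single identity $y * a = \tau(y *_Q a)$ (equivalently $\bullet * a = \tau \circ (\bullet *_Q a)$), which the paper writes in the iterated form $\tau^m(y) * \tau^n(z) = \tau^{m+1}(y *_Q z)$, together with the fact that $\tau$ normalizes ${\rm Inn}(R_Q)$.

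One caution: your assertion that ${\rm Inn}(R,*)$ \emph{equals} the group $\langle {\rm Inn}(R_Q), \tau\rangle$ is not true in general; only the inclusion ${\rm Inn}(R,*) \subseteq \langle {\rm Inn}(R_Q), \tau\rangle$ follows from $\phi_a = \tau\psi_a$. For a concrete counterexample, take $R = C_2 \times D_3$ (cyclic rack of order $2$ times dihedral quandle of order $3$, with $(a,x)*(b,y)=(a+1,\,2y-x)$): here ${\rm Inn}(R,*)$ has order $6$ while $\langle {\rm Inn}(R_Q),\tau\rangle \cong \mathbb{Z}/2 \times S_3$ has order $12$. Fortunately your argument does not actually need the equality of groups: the inclusion gives $\mathcal{M}_x \subseteq \bigcup_m \tau^m(\mathcal{M}^Q_x)$ directly, and for the reverse inclusion your observation that both $\tau$ and each $\psi_a$ are realized elementwise by $*$-moves ($\tau(y)=y*y$, $\psi_a(y)=(y\,\bar{*}\,y)*a$) already shows that $\langle {\rm Inn}(R_Q),\tau\rangle$ preserves $\mathcal{M}_x$, hence $\bigcup_m \tau^m(\mathcal{M}^Q_x) \subseteq \mathcal{M}_x$. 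The paper organizes this second inclusion slightly differently, verifying directly from the displayed identity that $\bigcup_m \tau^m(\mathcal{M}^Q_x)$ is a connected subrack of $(R,*)$ and then invoking maximality; this bypasses any claim about equality of the two inner automorphism groups.
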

\begin{proof}
We set $\mathcal{M}'_x = \bigcup_{m \in \mathbb{Z}} \tau^m(\mathcal{M}^Q_x).$ We have
\begin{align*}
\tau^m(y) * \tau^n(z) &= ((\tau^m(y) * \tau^m(y)) \mathbin{\bar{*}} \tau ^m(y)) * z = \tau^{m+1}(y) *_Q z\\
&= \tau^{m+1}(y *_Q z)
\end{align*}
and similarly $\tau^m(y) \mathbin{*} \tau^n(z) = \tau^{m-1}(y \mathbin{\overline{*_Q}} z)$ for any $y,z \in R$ (especially for $y,z \in \mathcal{M}^Q_x$) and $m,n \in \mathbb{Z}$, $\mathcal{M}'_x$ is a connected subrack of $R$; hence we have $\mathcal{M}'_x \subset \mathcal{M}_x$. Furthermore, by the definitions of $*_Q$ and $\tau$, ${\rm Inn}(\mathcal{M}_x,*)$ is contained in the subgroup of ${\rm Aut}(\mathcal{M}_x,*_Q)$ generated by ${\rm Inn}(\mathcal{M}_x, *_Q)$ and $\tau$. Since $\tau$ is central in this subgroup (recall that $\tau$ is a kink map of $(\mathcal{M}_x,*_Q)$, we have $\mathcal{M}_x = \bigcup_m \tau^m(\mathcal{M}'^Q_x)$, where we set $\mathcal{M}'^Q_x = {\rm Inn}(\mathcal{M}_x, *_Q)\cdot x$, i.e., the orbit of $x$ under the action of ${\rm Inn}(\mathcal{M}_x, *_Q)$. Here, the inner automorphism group ${\rm Inn}(\mathcal{M}_x, *_Q)$ is by definition generated by $\bullet *_Q y$ ($y \in \mathcal{M}_x$), but for each $y \in \mathcal{M}_x = \bigcup_m \tau^m(\mathcal{M}'^Q_x)$ there exists $y' \in \mathcal{M}'^Q_x$ such that $\bullet *_Q y = \bullet *_Q y'$; in fact, if $y \in \tau^m(\mathcal{M}'^Q_x)$, we can take $\tau^{-m}(y)$ as $y'$. Thus, $\mathcal{M}'^Q_x = {\rm Inn}(\mathcal{M}'^Q_x, *_Q) \cdot x$, which implies that $\mathcal{M}'^Q_x$ is a connected subquandle. Then we have $\mathcal{M}'^Q_x \subset \mathcal{M}^Q_x$ and hence $\mathcal{M}_x = \bigcup_m \tau^m(\mathcal{M}'^Q_x) \subset \bigcup_m \tau^m(\mathcal{M}^Q_x) = \mathcal{M}'_x$, as required.
\end{proof}

\begin{proof}[Proof of Theorem \ref{parallel-thm}]
Given a $\mathbb{Z}^n_R$-coloring $\mathcal{C}^{(n)}= (\mathcal{C}_1,\dots,\mathcal{C}_n): \{\text{arcs}\} \to \mathbb{Z}^n_R$ on $D$, we put colors $\mathcal{C}_1(\alpha),\dots,\mathcal{C}_n(\alpha)$ to the $n$ arcs of $D^{(n)}$ corresponding to each arc $\alpha$ of $D$. This is uniquely extended to be a whole coloring on $D^{(n)}$ (see Figure \ref{bij-fig}). Conversely, a $\mathbb{Z}$-coloring $\mathcal{C}$ of $D^{(n)}$ defines a $\mathbb{Z}^n_R$ coloring: we associate $(\mathcal{C}(\alpha_1),\dots,\mathcal{C}(\alpha_n))$ to an arc $\alpha$ of $D$, where $\alpha_1,\dots,\alpha_n$ are the $n$ arcs of $D^{(n)}$ corresponding to $\alpha$; by the definition of $*_R$, this defines a $\mathbb{Z}^n_R$-coloring. Thus, we have a bijection between ${\rm Col}_{\mathbb{Z}^n}(D)$ and ${\rm Col}_{\mathbb{Z}}(D^{(n)})$. In the following, we identify these two sets by this bijection.

%
\begin{figure}[t]
\centering
\includegraphics{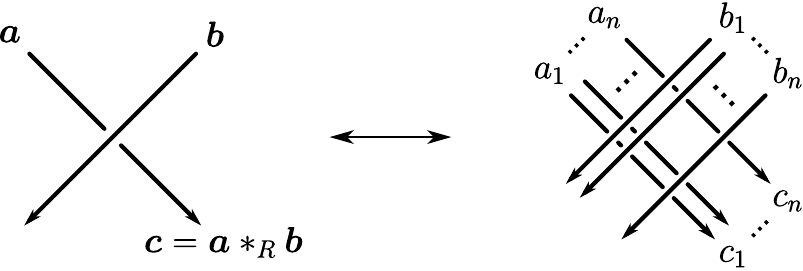}
\caption{A bijection between ${\rm Col}_{\mathbb{Z}^n}(D)$ and ${\rm Col}_{\mathbb{Z}}(D^{(n)})$}\label{bij-fig}
\end{figure}
\par By Lemma \ref{max-lem}, any $\mathbb{Z}^n_R$-coloring on $D$ is a coloring by a subrack, which is a cyclic rack. Since we only have to consider cyclic-rack colorings, a given color ${\bm a} \in \mathbb{Z}^n_R$ on the fixed arc $\gamma$ uniquely determines the colors of the other arcs successively, and it defines a whole coloring if and only if they accords when we go back to $\gamma$, i.e., $\tau^w({\bm a}) = {\bm a},$ where $\tau$ is the associated kink map. Thus, $r$ is injective and ${\rm Im}\,r = \{{\bm a} \in \mathbb{Z}^n \mid \tau^w({\bm a}) = {\bm a}\}.$
\par By a concrete calculation we find that
\[ \tau({\bm a}) = \big( (-1)^n a_i + 2(a_n - a_{n-1} + \cdots + (-1)^{n-1} a_1) \big)_i. \]
If $n$ is even, this shows that
\[ \tau^w({\bm a}) = \big( a_i + 2w(a_n - a_{n-1} + \cdots - a_1) \big)_i.\]
Then $\tau^w({\bm a}) = {\bm a}$ if and only if $w(a_1 - a_2 + \cdots - a_n) = 0$, as required. Next, suppose that $n$ is odd. In this case, a brief calculation shows that $\tau^2 = {\rm id}_{\mathbb{Z}^n}$. Then, if $w$ is even, the condition $\tau^w({\bm a}) = {\bm a}$ is always satisfied. If $w$ is odd, $\tau^w({\bm a}) \;(= \tau({\bm a}))$ equals ${\bm a}$ if and only if $a_i = - a_i + 2\Delta$ for $i = 1, \dots, n$, where $\Delta = a_n - a_{n-1} + \cdots + a_1$. This implies that $a_i = \Delta$ for any $i$ and hence $a_1 = \cdots = a_n$; this concludes the theorem.
\end{proof}

\end{document}